\newtheorem{theorem}{Theorem}[section]
\newtheorem{lemma}[theorem]{Lemma}
\newtheorem{claim}[theorem]{Claim}
\newtheorem{proposition}[theorem]{Proposition}
\newtheorem{observation}[theorem]{Observation}
\newtheorem{corollary}[theorem]{Corollary}
\newtheorem{definition}[theorem]{Definition}
\def\cF{{\cal F}}
\def\cK{{\cal K}}
\newcommand{\gnp}{\ensuremath{\mathcal{G}_{n,p}}}
\newcommand{\Exp}{\mbox{\bf E}}
\renewenvironment{proof}{\noindent{\bf Proof\,}}{\hfill$\Box$}
\title{A remark on the Tournament game}
\author{Dennis Clemens \thanks {Technische Universit\"at Hamburg-Harburg, Institut f\"ur Mathematik, Am Schwarzenberg-Campus 3, 21073 Hamburg, Germany. Email: dennis.clemens@tuhh.de} \and Mirjana Mikala\v{c}ki \thanks{Department of Mathematics and Informatics, Faculty of Sciences, University of
Novi Sad, Serbia. Research partly supported by Ministry of Education
and Science, Republic of Serbia, and Provincial Secretariat for
Science, Province of Vojvodina. Email: mirjana.mikalacki@dmi.uns.ac.rs}}
\date{}
\begin{document}
\maketitle
\begin{abstract}
We study the Maker-Breaker tournament game played on the edge set of a given graph $G$. Two players, Maker and Breaker claim unclaimed edges of $G$ in turns, and Maker wins if by the end of the game she claims all the edges of a pre-defined goal tournament. Given a tournament $T_k$ on $k$ vertices, we determine the threshold bias for the $(1:b)$ $T_k$-tournament game on $K_n$. We also look at the $(1:1)$ $T_k$-tournament game played on the edge set of a random graph $\gnp$ and determine the threshold probability for Maker's win. We compare these games with the clique game and discuss whether a random graph intuition is satisfied.
\end{abstract}
\section{Introduction}
Let $X$ be a finite set and let $\cF \subseteq 2^X$ be a family of the subsets of $X$. Let $a$ and $b$ be two positive integers. In the $(a:b)$ Maker-Breaker positional game $(X,\cF)$ two players, Maker and Breaker, take turns in claiming previously unclaimed elements of $X$, with Maker going first. In each turn, Maker claims $a$ unclaimed elements and then Breaker claims $b$ unclaimed elements of $X$. The game is played until all the elements of $X$ are claimed. Maker wins the game if she claims all the elements of some $F\subseteq \cF$ by the end of the game. Otherwise, Breaker wins. If Maker can win against any strategy of Breaker then the game is said to be a \textit{Maker's win}. Otherwise, the game is said to be a \textit{Breaker's win}. The set $X$ is referred to as the \textit{board} of the game, while the elements of $\cF$ are referred to as the \textit{winning sets}. The values $a$ and $b$ are called \textit{biases} of Maker, and Breaker, respectively. The most basic case of these games are \textit{unbiased games}, where $a=b=1$.

In this paper, we focus on Maker-Breaker \textit{graph games}, i.e., games where the board is the edge set of a given graph $G$. 
In these games Maker's aim is to create a graph consisting only of edges claimed by her that contains some predefined graph theoretic structure. For example, in the \textit{$k$-clique game} (or sometimes abbreviated just as \textit{clique game} when the value of $k$ is not crucial), Maker's goal is to create a graph that contains a clique of order at least $k$. We denote this game by $(E(G),\cK_k)$.
  
Here, we study a variant of the clique game - the \textit{$T$-tournament game} $(E(G),\cK_{T})$. In the $T$-tournament game, introduced by Beck in~\cite{BeckBook}, the goal graph is a tournament $T$, a complete graph where each edge is directed. Before the game starts, the tournament $T$ is fixed, and Maker and Breaker in turns claim edges, and Maker also chooses one of the two possible \textit{orientations} whenever she claims an edge. If her graph contains a copy of the given tournament $T$ by the end of the game, Maker wins. Otherwise, Breaker does.

\noindent \textbf{Games on $\mathbf{K_n.}$} Very well-studied graph games are the ones where $G=K_n$ is the complete graph on $n$ vertices. 
Erd\H{o}s and Selfridge~\cite{ES} initiated the study of the largest value of $k$, $k_c=k_c(n)$, such that Maker can win the $k_c$-clique game on $K_n$ and they were able to prove that $k_c\leq 2(1-o(1))\log_2 n$. Indeed, it turns out that in $(1:1)$ Maker-Breaker clique game on $K_n$, Maker has a strategy to occupy a clique of size $(2-o(1))\log_2(n)$, as shown by Beck~\cite{BeckBook}, and therefore $k_c=2(1-o(1))\log_2 n$ holds. The most interesting fact about this result is that it shows an intriguing relation between games and random graphs here, referred to as the {\em random graph intuition} or {\em probabilistic intuition}. To be precise,
if both players played randomly throughout the game, then Maker's graph would be distributed as a random graph with $n$ vertices
and $\lceil \frac{1}{2}\binom{n}{2} \rceil$ edges, which is well known to have clique size $(2-o(1))\log_2(n)$ with high probability, see e.g. \cite{AS}.
That is, for most values of $k$, a randomly played $k$-clique game on $K_n$ typically has the same winner as the deterministic game played by two intelligent players.

For the tournament game we can ask the same question, as initiated by Beck~\cite{BeckBook}. Motivated by the study of a randomly played $T$-tournament game, Beck conjectured that the largest value of $k$, $k_t=k_t(n)$, for which Maker can win in the $T$-tournament game, for any tournament $T$ on (at most) $k$ vertices, is of size $(1-o(1))\log_2 n$.
However, as the first author together with Gebauer and Liebenau ~\cite{CGL} showed, the truth is twice as large as the conjectured value, i.e., $k_t=(2-o(1))\log_2 n$. This, in particular, tells us two things. Opposite to the clique game, the tournament game does not satisfy the random graph intuition mentioned above. Secondly, since the two values, $k_c$ and $k_t$, are very close to each other, it does not make a big difference for Maker whether she needs to build a graph with or without orientations in an unbiased game on $K_n$.

In the following, we want to find out whether we have similar observations in case we fix $k$ to be a constant, while changing either the bias of Breaker or the board of the game. We start with biased games, in order to give more power to Breaker.
Chv\'{a}tal and Erd\H{o}s~\cite{ChEr} observed that Maker-Breaker games are \textit{bias monotone}, meaning that if the $(1:b)$ game $(X,\cF)$ is a Breaker's win, then the $(1:b+1)$ game is also a Breaker's win. Having this in mind, it thus becomes interesting to find the unique  \textit{threshold bias} $b_{\cF}(n)=b_{\cF}$, which is the largest non-negative integer such that for every $b\leq b_{\cF}$ the $(1:b)$ game is a Maker's win. For the $k$-clique game on $K_n$, Bednarska and \L uczak~\cite{BL} showed that the threshold bias is $b_{\cK_k}=\Theta(n^{\frac{2}{k+1}})$. Naturally, one may wonder what happens with the tournament game, and whether in this case orientations of the edges make things more complicated for Maker. We show that, for every tournament $T$ of order $k$, the threshold bias of the $T$-tournament game $(E(K_n),\cK_{T})$ is of the same order as in the $k$-clique game. 

\begin{proposition}
\label{biased_tournament}
Let $T$ be a tournament on $k\geq 3$ vertices, then 
the threshold bias for the $T$-tournament game on $K_n$
is $b_{\cK_{T}}=\Theta(n^{\frac{2}{k+1}})$.
\end{proposition}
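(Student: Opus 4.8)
The plan is to prove the two matching bounds $b_{\cK_{T}}=O(n^{2/(k+1)})$ and $b_{\cK_{T}}=\Omega(n^{2/(k+1)})$ by playing the $T$-tournament game off against the ordinary $k$-clique game, whose threshold bias $\Theta(n^{2/(k+1)})$ is known by Bednarska--\L uczak~\cite{BL}. The upper bound is immediate: any Maker strategy that wins the $T$-tournament game on $K_n$ also wins the $k$-clique game, since forgetting the orientations Maker commits to leaves her with an (undirected) graph containing a $K_k$. Contrapositively, Breaker's winning strategy for the $(1:b)$ $k$-clique game, which exists whenever $b\ge Cn^{2/(k+1)}$ by~\cite{BL}, is also a winning strategy for the $(1:b)$ $T$-tournament game, so $b_{\cK_{T}}<Cn^{2/(k+1)}$.

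For the lower bound I would fix an orientation \emph{template} once and for all. Call a $k$-set $S\subseteq V(K_n)$ \emph{good} for a tournament $D$ on $V(K_n)$ if $D[S]\cong T$. For a fixed $S$ and a uniformly random tournament $D$ we have $\Prob[D[S]\cong T]=c_T$, where $c_T:=N_T/2^{\binom k2}\in(0,1]$ is a constant depending only on $T$ and $N_T$ is the number of labelled tournaments on $[k]$ isomorphic to $T$. Hence a random $D$ has $c_T\binom nk$ good $k$-sets in expectation, and by averaging there is a fixed tournament $D_0$ on $V(K_n)$ with at least $c_T\binom nk=\Theta(n^k)$ good $k$-sets. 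Maker's strategy is then: play the Bednarska--\L uczak random strategy for the clique game -- in each move claim a uniformly random unclaimed edge -- and in addition orient every claimed edge in accordance with $D_0$. If at the end Maker's undirected graph contains a $K_k$ on some good set $S$, then her directed graph induces $D_0[S]\cong T$, so she has built a copy of $T$ and won.

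It remains to show that there is $\epsilon>0$ such that, for every $b\le\epsilon n^{2/(k+1)}$, this random strategy creates a $K_k$ on a good $k$-set with positive probability against any fixed strategy of Breaker; since finite Maker-Breaker games are determined, this makes the $T$-tournament game a Maker's win. This is exactly the statement proved by Bednarska--\L uczak for the $k$-clique game, with the single change that only the $\Theta(n^k)$ copies of $K_k$ sitting on good $k$-sets are counted as winning sets. These copies form a positive-density subfamily of all copies of $K_k$, so the relevant first-moment quantity -- the expected number of such copies in Maker's random graph, of order $n^{2k/(k+1)}\to\infty$ -- drops only by the constant factor $c_T$, while every second-moment (pair-counting) bound can only decrease when one passes to a subfamily of copies; thus all the estimates in their proof survive with adjusted constants, and their argument shows Maker wins.

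The only genuine work is this last step: one must go back through the Bednarska--\L uczak proof and verify that restricting to a positive-density subfamily of the $K_k$-copies breaks none of its estimates -- in particular that Breaker still cannot destroy all but a vanishing fraction of the target copies, and that their handling of Breaker's adaptive interference is insensitive to the restriction. The remaining ingredients -- the averaging choice of $D_0$, the reduction, and the appeal to determinacy -- are routine. (When $T$ is the transitive tournament the template is unnecessary: Maker simply orients each of her edges consistently with a fixed linear order on $V(K_n)$, and a plain Maker win in the $k$-clique game already yields a copy of $T$; the template is only there to accommodate non-transitive $T$.)
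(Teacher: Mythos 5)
Your proof is correct in outline and reaches the right conclusion, but for the lower bound you use a genuinely different construction than the paper. You obtain a positive-density subfamily of $K_k$-copies by fixing a random tournament template $D_0$ on $V(K_n)$ via an averaging argument and restricting attention to the $k$-sets $S$ with $D_0[S]\cong T$. The paper instead deterministically splits $V(K_n)$ into $k$ roughly equal classes $V_1,\ldots,V_k$, identifies class $V_i$ with vertex $v_i$ of $T$, has Maker confine her play to the Tur\'an graph $T_{n,k}$ and orient each cross-class edge by the direction of $v_iv_j$ in $T$, and then reproves the Bednarska--\L uczak resilience machinery directly for ``good copies'' of $K_k$ (one vertex per class) in $T_{n,k}$ (Claim~\ref{cor_p} through Claim~\ref{maker1}). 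Your template approach is more flexible and avoids introducing the auxiliary Tur\'an board, but it is less explicit (existence of $D_0$ by averaging rather than a concrete rule), and the step you flag as ``the only genuine work''---verifying that Janson-type and resilience estimates survive restriction to a positive-density subfamily---is exactly what the paper's Section~\ref{sec:prop} carries out in full. Your observation that the first-moment drops only by the constant $c_T$ while the pair-counting term can only shrink is correct and is the heart of why the transfer works; the paper makes that quantitative for its Tur\'an family. The upper-bound direction (Breaker's clique-blocking strategy from~\cite{BL} also blocks $T$) is identical to the paper's.
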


\textbf{Games on random boards.} Another way to give Breaker more power in positional games is to play unbiased graph games on a random graph, as introduced by Stojakovi\'{c} and Szab\'{o}~\cite{SS}. The idea behind this approach is to make the board sparser before the game starts by randomly eliminating edges, so that some of the winning sets no longer exist. We look at the random graph model $\gnp$, which is obtained from the complete graph on $n$ vertices by removing each edge independently with probability $1-p$. 

Now, if an unbiased game $(E(K_n),\cF)$ is a Maker's win, then we are curious about finding the \textit{threshold probability} $p_{\cF}$ such that for $p=\omega (p_{\cF})$ the game $(E(\gnp),\cF)$ is a Maker's win asymptotically almost surely (i.e.\ with probability tending to $1$ as $n$ tends to infinity and abbreviated \textit{a.a.s.\ }in the rest of the paper), and for $p=o(p_{\cF})$, the game $(E(\gnp),\cF)$ is a.a.s.\ a Breaker's win.

When the $k$-clique game is played on $\gnp$, Stojakovi\'c and Szab\'o~\cite{SS} showed that for $k=3$, in the \textit{triangle} game, $p_{\cK_3}=n^{-\frac{5}{9}}$ and for $k\geq 4$, it holds that $n^{-\frac{2}{k+1}-\varepsilon} \leq p_{\cK_k}\leq n^{-\frac{2}{k+1}}$. M\"{u}ller and Stojakovi\'c~\cite{MS} recently proved that for all $k\geq 4$ the threshold probability is indeed $p_{\cK_k} = n^{-\frac{2}{k+1}}$. This again underlines an intriguing relation between games and random graphs, again referred to as the \textit{probabilistic intuition}. Indeed, what we can observe here in case $k\geq 4$ (and also holds for several other natural graph games)
is that the threshold probability for Maker's win in the $(1:1)$ game $(E(\gnp),\cF)$ is of the same order of magnitude as the inverse of the threshold bias $b_{\cF}$ in the $(1:b)$ game $(E(K_n),\cF)$. The triangle game is the only exception in this regard, as 
here Maker a.a.s.\ can win also for probabilities below the so-called critical probability $1/b_{\cK_3}$. 

We show that the tournament game behaves similarly to the clique game when played on $\gnp$. So, even when played on a sparse graph $\gnp$, creating a graph with oriented edges is not much more difficult for Maker than creating a graph without oriented edges. For the tournaments on $k$ vertices, $k\geq 4$, we show the following, which also supports the probabilistic intuition.

\begin{proposition}\label{random_tournament}
Let $T$ be a tournament on $k\geq 4$ vertices, then 
the threshold probability for winning the $T$-tournament game on
$\gnp$ is $n^{-\frac{2}{k+1}}.$
\end{proposition}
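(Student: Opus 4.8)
The plan is to prove the two halves of the claimed threshold separately; throughout write $p^{\ast}:=n^{-2/(k+1)}$, and recall that a copy of $T$ in Maker's graph is, after forgetting orientations, a copy of $K_{k}$.

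\emph{Breaker's side ($p=o(p^{\ast})$).} This I would reduce to the clique game in one line: a Breaker strategy that wins the $k$-clique game on a board $G$ also wins the $T$-tournament game on $G$, since it prevents Maker from ever owning even an unoriented $K_{k}$, let alone an oriented copy of $T$. By the theorem of M\"{u}ller and Stojakovi\'{c}~\cite{MS} (this is where $k\geq 4$ is used), for $p=o(p^{\ast})$ Breaker a.a.s.\ wins the $k$-clique game on $\gnp$; hence he a.a.s.\ wins the $T$-tournament game, which gives the lower bound on the threshold.

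\emph{Maker's side ($p=\omega(p^{\ast})$).} Here I would run the argument showing that Maker wins the $k$-clique game on $\gnp$ for $p=\omega(p^{\ast})$ --- due to Stojakovi\'{c} and Szab\'{o}~\cite{SS} and M\"{u}ller and Stojakovi\'{c}~\cite{MS}, and ultimately resting on the random-play idea of Bednarska and \L{}uczak~\cite{BL} --- with Maker additionally assigning each edge she claims a uniformly random orientation, independently of everything else. Since the $T$-tournament game on a fixed board is a finite game of perfect information, it is a Maker's win as soon as it is not a Breaker's win, so it suffices to show: a.a.s.\ over the choice of $G=\gnp$, against every strategy of Breaker this randomized play of Maker builds a copy of $T$ with positive probability. (If Breaker had a winning strategy on such a $G$, no play of Maker, randomized or not, could beat it.) To analyse this I would track \emph{pairs} $(K,\psi)$, where $K$ is a copy of $K_{k}$ in $G$ and $\psi\colon V(K)\to V(T)$ realises $K$ as a copy of $T$, and call such a pair won by Maker precisely when she claims all $\binom{k}{2}$ edges of $K$ and orients each of them exactly as $\psi$ dictates; on such an event she owns a copy of $T$. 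The clique-game proof of~\cite{SS,MS} shows, in essence, that a.a.s.\ $\gnp$ enjoys a quantitative property concerning the abundance and spread of copies of $K_{k}$ that forces Maker's win; replacing ``copies of $K_{k}$'' by the pairs $(K,\psi)$ changes every relevant quantity by only a bounded factor: the number of pairs is $\Theta\bigl(n^{k}p^{\binom{k}{2}}\bigr)$, the same order as the number of copies of $K_{k}$ (which tends to infinity for $p=\omega(p^{\ast})$, being already $\Theta(n^{2k/(k+1)})$ at $p=p^{\ast}$); conditioned on Maker claiming the $\binom{k}{2}$ edges of $K$ she orients each correctly with the constant probability $2^{-\binom{k}{2}}$; and the pairwise correlations between these events can only \emph{decrease}, since two pairs forcing opposite orientations on a shared edge are mutually exclusive. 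Hence the second-moment/Janson-type estimate underlying~\cite{SS,MS} (see e.g.\ \cite{AS} for Janson's inequality) goes through with these constant-factor modifications and yields that Maker's randomized play builds some copy of $T$ with probability $1-o(1)>0$, as required. This is the same unoriented-to-oriented adaptation that was carried out in~\cite{CGL} in the unbounded-$k$ regime and that already underlies the Maker's side of Proposition~\ref{biased_tournament}.

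The main obstacle is that this Maker's-side argument cannot simply be quoted: one must re-run the full proof of the clique-game threshold of~\cite{SS,MS} (resting on~\cite{BL}) --- in particular its delicate accounting for Breaker's adaptive interference and its behaviour at the \emph{exact} exponent $2/(k+1)$ for $k\geq 4$ --- while threading the orientation randomness through it. The saving grace, which is exactly why the ``random graph intuition'' holds for the tournament game, is that orientations enter only as the harmless constant factor $2^{-\binom{k}{2}}$ per potential copy and never restrict which edges Maker is allowed to claim; so no quantity in the clique-game proof changes by more than a constant, and in particular the threshold does not move.
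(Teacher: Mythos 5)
Your Breaker's side coincides with the paper's (quote the Müller--Stojaković lower bound for the $k$-clique game, $k\geq 4$, and observe that blocking $K_k$ blocks $T$), but your Maker's side takes a genuinely different route, and your closing remark that it ``already underlies the Maker's side of Proposition~\ref{biased_tournament}'' is not accurate. The paper handles orientations \emph{deterministically}, not randomly: before the game Maker fixes a balanced partition $V_1\cup\ldots\cup V_k=[n]$, identifies class $V_i$ with vertex $v_i$ of $T$, and whenever she claims an edge between $V_i$ and $V_j$ she orients it the way $v_iv_j$ is oriented in $T$. This reduces the $T$-tournament game to an ordinary Maker--Breaker game for a ``good'' (rainbow) copy of $K_k$ on the random subgraph ${\cal G}(T_{n,k},p)$ of the Tur\'an graph, which the paper then handles by a Bednarska--\L uczak-style random-play-plus-resilience argument (Claim~\ref{cor_p} through Corollary~\ref{maker2}); no orientation randomness ever enters the probabilistic analysis, and only a coarse Maker bound $p\geq c_3 n^{-2/(k+1)}$ is needed, not the sharp clique threshold of~\cite{MS}. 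Your alternative --- have Maker play the~\cite{BL,SS} random strategy and independently flip a fair coin for each orientation, then track pairs $(K,\psi)$ and argue the second-moment/Janson-type bounds only move by the constant $2^{-\binom{k}{2}}$ --- is plausible and the constant-factor observation is sound, including the remark that orientation conflicts can only decrease correlations; the determinacy step (``Breaker does not win, hence Maker does'') is also exactly what the paper uses. But you yourself flag that one would need to rethread the entire analysis of~\cite{SS,MS}, including Breaker's adaptive interference, through the orientation randomness, and you do not carry this out; the paper's partition trick is precisely the device that makes all of that unnecessary, by converting the orientation constraint into a combinatorial constraint (``rainbow with respect to the fixed partition'') that costs nothing in density and hence does not move the threshold.
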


So again, for $k\geq 4$, the outcome of the game does not depend much on the choice of the tournament $T$ on $k$ vertices, i.e., on the way the edges of the goal tournament are oriented. However, our next theorem states that the tournament on three vertices behaves differently. In case $T$ is the acyclic triangle $T_A$, we obtain the same threshold probability as in the triangle game on $\gnp$. But, in case $T$ is the cyclic triangle $T_C$, the threshold probability is closer to the critical probability $1/b_{\cK_3}$.

\begin{theorem}\label{T_3}
The threshold probability for winning the unbiased $T_A$-tournament game on
$\gnp$ is $p_{\cK_{T_A}}=n^{-\frac{5}{9}}$, while for the unbiased $T_C$-tournament game 
this threshold probability is $p_{\cK_{T_C}}=n^{-\frac{8}{15}}.$
\end{theorem}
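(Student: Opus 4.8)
The plan is to treat the two tournaments on three vertices separately, since only the cyclic one requires genuinely new arguments.

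\emph{The acyclic triangle.} For $T_A$ the game is equivalent, for Maker, to the ordinary triangle game $(E(\gnp),\cK_3)$. Fix any linear order on $V$, and let Maker orient each edge $\{u,v\}$ she claims, $u<v$, as $u\to v$. Ignoring orientations, every strategy of Maker is then a triangle-game strategy, and every triangle she ever occupies is transitively oriented, i.e.\ a copy of $T_A$; so if $p=\omega(n^{-5/9})$ she wins the triangle game a.a.s.\ by \cite{SS} and hence the $T_A$-game. Conversely, a Breaker strategy that a.a.s.\ keeps Maker from ever owning a triangle, which exists for $p=o(n^{-5/9})$ by \cite{SS}, also keeps her from owning $T_A$. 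Hence $p_{\cK_{T_A}}=n^{-5/9}$.

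\emph{The cyclic triangle, Maker's side ($p=\omega(n^{-8/15})$).} Here I would follow the Maker argument for the triangle game in \cite{SS}, adapted to orientations. In \cite{SS} Maker wins once $\gnp$ contains, abundantly, a small host configuration whose appearance threshold is exactly $n^{-5/9}$ (edge density $9/5$); she then carries out an adaptive threat-building strategy — roughly, she builds a star of claimed edges around an adaptively chosen centre, each new leaf together with an old leaf spanning an unclaimed $\gnp$-edge giving a threat to close a triangle, and she wins once two such threats coexist. The new feature for $T_C$ is that Maker may not pre-orient: a directed cherry $u\to v\to w$ threatens only the single closing edge $w\to u$, so in a star around $v$ only ``source--sink'' pairs of leaves (one edge out of $v$, one into $v$) are useful, which cuts Maker's threat supply by a constant factor. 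Consequently the host configuration she needs is strictly denser, of edge density $15/8$, and hence appears abundantly precisely for $p=\omega(n^{-8/15})$; given this, the reduction to an actual winning strategy is the usual bookkeeping of \cite{SS}, \cite{BL} (Breaker's interference cannot keep Maker from reaching a usable configuration).

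\emph{The cyclic triangle, Breaker's side ($p=o(n^{-8/15})$).} I would first show that a.a.s.\ $\gnp$ is locally too sparse for the configuration above — every subgraph on a bounded number of vertices has density $<15/8$ — and then equip Breaker with the natural blocking strategy, essentially that of \cite{SS} made orientation-aware: whenever Maker completes a directed cherry $u\to v\to w$, Breaker answers by claiming the closing edge $\{u,w\}$ (with a simple clean-up rule for the rare move that creates a cherry in two places at once). Local sparseness is exactly what lets one argue that no single Maker move can present two distinct closing-edge threats, so Breaker never falls behind and wins a.a.s. Hence $p_{\cK_{T_C}}=n^{-8/15}$.

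\emph{Where the work is.} The routine parts are the $T_A$ reduction and the first/second-moment estimates turning ``density $15/8$'' into ``appears, resp.\ does not appear, abundantly for $p$ above, resp.\ below, $n^{-8/15}$''. The real content — and the reason the exponent is $8/15$ rather than $5/9$ — is the matched structural analysis of the cyclic game: on the Maker side, pinning down the extremal host configuration of density $15/8$ and checking that the source--sink restriction still lets Maker force $T_C$ on it; on the Breaker side, showing that below this density the blocking strategy can never be forked. I expect the Maker-side identification of the correct configuration, and the verification that it genuinely yields a forced $T_C$, to be the main obstacle.
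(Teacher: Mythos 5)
Your $T_A$ part is correct and matches the paper exactly: use the $\cK_3$ results from~\cite{SS}, with Maker orienting along a fixed linear order so that every triangle she completes is transitive.

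For $T_C$ the proposal has a genuine gap on Breaker's side, and the Maker side is not actually carried out. On the Maker side the paper does not run an adaptive threat-building argument; it exhibits a single concrete graph $H$ with $m(H)=\frac{15}{8}$, pre-orients its edges so that \emph{every} triangle of $H$ is cyclic, and then gives a short explicit strategy (four forced moves) for Maker to win the undirected triangle game on $H$. Your sketch ("follow the adaptive star-building of \cite{SS}, with only source--sink pairs of leaves usable") gestures at why the density should go up to $\frac{15}{8}$, but does not produce the configuration or the strategy --- and you say yourself this is the main obstacle. That is exactly the content that is missing.

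The more serious problem is the Breaker side. You propose the orientation-aware cherry-blocking rule ("whenever Maker completes a directed cherry $u\to v\to w$, claim $uw$") and assert that local sparseness (no subgraph of density $\geq\frac{15}{8}$) prevents a single Maker move from creating two closing-edge threats. This is false. Take $K_5^-$ (density $\frac{9}{5}<\frac{15}{8}$) on $\{v_1,\dots,v_5\}$ with $v_4v_5$ missing. Maker plays $(v_1,v_2)$ (no cherry, so your rule does not prescribe Breaker's reply), then $(v_3,v_2)$ (still no cherry), then $(v_2,v_4)$. This last move creates two cherries $v_1\to v_2\to v_4$ and $v_3\to v_2\to v_4$ with closing edges $v_1v_4$ and $v_3v_4$, both of which can still be free; Breaker can only take one and Maker closes a cyclic triangle on her next move. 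So a reactive cherry-blocker can be forked on a graph strictly below density $\frac{15}{8}$. The paper avoids this by a structural route: it proves (Propositions~\ref{collection_density}, \ref{properties} and Lemma~\ref{collection}) that any minimal triangle collection of density $<\frac{15}{8}$ on which Maker could conceivably win must be one of finitely many specific graphs ($K_5^-$ and four others), and then (Lemma~\ref{breaker_part}) gives for each of these a bespoke Breaker strategy --- for $K_5^-$ in particular Breaker's first move is a pre-emptive deletion (e.g.\ of $v_2v_4$ in the scenario above), not a reaction to a cherry, followed by a multi-page case analysis. Your "simple clean-up rule for the rare move that creates a cherry in two places at once" is precisely where the real work lies, and a uniform reactive strategy cannot do the job.
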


{\bf Notation and terminology.}
Our graph-theoretic notation is standard and follows that of~\cite{West}. In par\-ti\-cu\-lar, we
use the following. For a graph $G$, $V(G)$ and $E(G)$ denote its sets of vertices and edges
respectively, $v(G) = |V(G)|$ and $e(G) = |E(G)|$. For disjoint sets $A,B \subseteq V(G)$, let $E(A,B)$ denote the set of edges of $G$ with one endpoint in $A$ and one endpoint in $B$. Given two vertices, $x$ and $y$, an undirected edge is denoted by $xy$, while $(x,y)$ is a directed edge with orientation from vertex $x$ towards vertex $y$. If an edge is unclaimed by any of the players we call it \textit{free}. For a vertex $x \in V(G)$, $N(x) = \{u \in
G : \exists v \in S, uv \in E(G)\}$ denotes the set of neighbours of the vertex $x$ in $G$. We
let $d(x) = |N(x)|$ denote the degree of vertex $x$ in graph $G$. The minimum and maximum degrees of a graph $G$ are denoted by $\delta(G)$ and $\Delta(G)$ respectively. The density of a graph $G$ is defined as $d(G)=\frac{e(G)}{v(G)}$, while \textit{maximum density } is $m(G)=\max_{H\subseteq G} d(H)$. 

Let $n,k \in\mathbb{N}$ be positive integers. Then with $T_{n,k}$ we denote the {\em Tur\'an graph}
with $n$ vertices and $k$ vertex classes.
That is, its vertex set $V(T_{n,k})=[n]$ comes with a partition $V(T_{n,k})=V_1\cup\ldots \cup V_k$ such that
$\Big| |V_i|- |V_j| \Big|\leq 1$ for all $1\leq i<j\leq k$, and such that its edge set is 
$E(T_{n,k})=\{vw\ |\ v\in V_i,\ w\in V_j,\  1\leq i<j\leq k\}.$ 
Moreover, let $G$ be a graph on at most $k$ vertices, then we say that a subgraph $H\subseteq T_{n,k}$ is a {\em good copy}
of $G$ in $T_{n,k}$, if $G\cong H$ and $|V(H)\cap V_i|\leq 1$ for every $i\in [k]$.
Let $p\in [0,1]$ and moreover let $M\in [e(T_{n,r})]$. Then with ${\cal G}(T_{n,k},p)$ we denote the random graph model obtained from $T_{n,k}$ by deleting each edge of $T_{n,k}$ independently with probability $1-p$.
That is, ${\cal G}(T_{n,k},p)$ is the probability space of all subgraphs $G$ of $T_{n,k}$,
where the probability for a subgraph to be chosen is $p^{e(G)}(1-p)^{e(T_{n,k})-e(G)}$.
Similarly, with ${\cal G}(T_{n,k},M)$ we denote the probability space of 
all subgraphs $G$ of $T_{n,k}$ with $M$ edges, together with the uniform distribution.

Let $Bin(n,p)$ denote the binomial distribution, i.e.\ the distribution of the number of successes
among $n$ independent experiments, where in each experiment we have success with probability~$p$.
Moreover, let us write $X\sim Bin(n,p)$ if $X$ is a random variable with distribution $Bin(n,p)$. 

Finally, $W_k=(V,E)$ is called a {\em k-wheel}, if it is obtained from the cycle $C_k$
by adding one further vertex $z$ which is made adjacent to every vertex of $C_k$.
The special vertex $z$ is called the {\em center} of $C_k$.

Throughout the paper $\ln$ stands for the natural logarithm. 

{\bf Organization of the paper.}
The rest of the paper is organized as follows. At first we collect some useful results in the Preliminaries. In Section~\ref{sec:prop} we prove Proposition~\ref{biased_tournament} and Proposition~\ref{random_tournament}. Finally, in Section~\ref{sec:triangle} we prove Theorem~\ref{T_3}.

\section{Preliminaries} \label{tg:preliminaries}

The following estimate is usually referred to as a Chernoff inequality \cite{JLR}.

\begin{lemma}[Theorem 2.1 in \cite{JLR}]\label{chernoff}
Let $X\sim Bin(n,p)$ and $\lambda = \Exp(X) = np$. Then for $t\geq 0$, it holds that
$Pr(X\geq \Exp(X) + t)\leq \exp\Big( -\frac{t^2}{2\lambda} + \frac{t^3}{6\lambda^2} \Big).$
\end{lemma}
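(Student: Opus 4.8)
The plan is to use the standard exponential-moment (Chernoff--Bernstein) method. We may assume $\lambda>0$, and also $t>0$, since for $t=0$ the asserted right-hand side equals $1$ while $\Pr(X\ge\lambda)\le 1$ trivially. For an arbitrary parameter $s>0$ to be fixed later, Markov's inequality applied to the non-negative random variable $e^{sX}$ gives
\[
\Pr(X\ge\lambda+t)=\Pr\!\big(e^{sX}\ge e^{s(\lambda+t)}\big)\ \le\ e^{-s(\lambda+t)}\,\Exp\!\big(e^{sX}\big).
\]

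Next I would bound the moment generating function of $X$. Since $X\sim Bin(n,p)$, we may write $X=X_1+\dots+X_n$ with the $X_i$ independent Bernoulli$(p)$ random variables, so by independence together with the inequality $1+y\le e^{y}$,
\[
\Exp\!\big(e^{sX}\big)=\prod_{i=1}^n\Exp\!\big(e^{sX_i}\big)=\big(1-p+pe^{s}\big)^n=\big(1+p(e^{s}-1)\big)^n\le \exp\!\big(np(e^{s}-1)\big)=\exp\!\big(\lambda(e^{s}-1)\big).
\]
Combining this with the previous display yields
\[
\Pr(X\ge\lambda+t)\ \le\ \exp\!\big(\lambda(e^{s}-1)-s(\lambda+t)\big)\qquad\text{for every }s>0 .
\]

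Now I would choose $s$ so as to minimise the exponent. Differentiating $s\mapsto\lambda(e^{s}-1)-s(\lambda+t)$ shows that the minimum is attained at $s=\ln(1+t/\lambda)$, which is positive since $t>0$; substituting this value gives
\[
\Pr(X\ge\lambda+t)\ \le\ \exp\!\big(-\lambda\,\varphi(t/\lambda)\big),\qquad\text{where}\quad\varphi(x):=(1+x)\ln(1+x)-x .
\]
It then remains to establish the elementary inequality $\varphi(x)\ge\frac{x^2}{2}-\frac{x^3}{6}$ for all $x\ge0$, because taking $x=t/\lambda$ gives $-\lambda\,\varphi(t/\lambda)\le-\frac{t^2}{2\lambda}+\frac{t^3}{6\lambda^2}$, which is exactly the claimed bound.

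For the elementary inequality, put $g(x):=\varphi(x)-\frac{x^2}{2}+\frac{x^3}{6}$. Then $g(0)=0$; using $\varphi'(x)=\ln(1+x)$ we get $g'(x)=\ln(1+x)-x+\frac{x^2}{2}$ with $g'(0)=0$; and $g''(x)=\frac{1}{1+x}-1+x=\frac{x^2}{1+x}\ge0$ for $x\ge0$. Hence $g'$ is non-decreasing on $[0,\infty)$, so $g'\ge g'(0)=0$ there, so $g$ is non-decreasing, so $g\ge g(0)=0$ on $[0,\infty)$, as needed. None of the steps is a genuine obstacle --- this is a classical estimate --- but the one point worth care is that the exact minimiser $s=\ln(1+t/\lambda)$ is essential: a simpler guess such as $s=t/\lambda$ leaves in the exponent an extra non-negative term (of order $t^4/\lambda^3$ and higher) and hence falls short of the stated bound.
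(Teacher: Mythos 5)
Your proof is correct. The paper itself does not prove this lemma---it cites it verbatim as Theorem~2.1 in \cite{JLR} and uses it as a black box, so there is no argument of the authors' own to compare against. Your derivation is exactly the standard Chernoff--Bernstein argument that appears in \cite{JLR}: exponential Markov, the bound $\Exp(e^{sX})\le\exp(\lambda(e^s-1))$, optimization at $s=\ln(1+t/\lambda)$ to reach $\exp(-\lambda\varphi(t/\lambda))$ with $\varphi(x)=(1+x)\ln(1+x)-x$, and then the elementary inequality $\varphi(x)\ge\frac{x^2}{2}-\frac{x^3}{6}$, which you verify cleanly by the second-derivative test ($g''(x)=x^2/(1+x)\ge0$). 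All steps, including the edge cases $t=0$ and $\lambda=0$ and the remark that the crude choice $s=t/\lambda$ would not suffice, are sound.
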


As indicated above, we will consider the random graph models
${\cal G}(T_{n,k},p)$ and ${\cal G}(T_{n,k},M)$.
For this, we will make use of some general results about random sets.

Following \cite{JLR}, let $\Gamma$ be a set of size $N\in\mathbb{N}$.
For $p\in [0,1]$, we let $\Gamma_p$ denote the probability space of all subsets $A\subseteq \Gamma$,
where the probability of choosing $A$ is $p^{|A|}(1-p)^{|\Gamma\setminus A|}$.
Moreover, for $M\in [N]$, we let $\Gamma_M$ denote the probability space
of all subsets $A\subseteq \Gamma$ of size $M$,
together with the uniform distribution. In case we choose a random set $A$ according to the model
$\Gamma_p$, we shortly write $A\sim \Gamma_p$. Similarly, we write $A\sim \Gamma_M$,
when $A$ is chosen according to the uniform model $\Gamma_M$.

One important fact about the two models above is that in many cases they are closely related to each other 
when $p\sim \frac{M}{N}$; see Section 1.4 in \cite{JLR}. In particular, we will make use of the following two
statements, which help us to transfer results from one model to the other.

\begin{lemma}[Pittel's Inequality, Equation (1.6) in \cite{JLR}]\label{random_pittel}
Let $\Gamma$ be a set of size $N$, let $M\in [N]$, and $p=\frac{M}{N} \in [0,1]$.
Let ${\cal P}$ be a family of subsets of $\Gamma$. Moreover, let $H_p\sim \Gamma_p$
and $H_M\sim \Gamma_M$, then
$$Pr(H_M\notin {\cal P})\leq 3\sqrt{M}\cdot Pr(H_p\notin {\cal P}).$$
\end{lemma}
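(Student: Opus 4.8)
The plan is to exploit the standard fact that $H_p$, once we condition on its cardinality, is a \emph{uniformly} random subset of that cardinality, and then to pay only a small multiplicative price for forcing this cardinality to be exactly $M$. Concretely, under $\Gamma_p$ every subset $A\subseteq\Gamma$ with $|A|=m$ gets the same weight $p^{m}(1-p)^{N-m}$, so the conditional law of $H_p$ given $|H_p|=m$ is exactly $\Gamma_m$; in particular $Pr(H_p\notin{\cal P}\mid |H_p|=M)=Pr(H_M\notin{\cal P})$. I would then condition on the value of $|H_p|$ and discard every term except the one for $m=M$:
\[
Pr(H_p\notin{\cal P})=\sum_{m=0}^{N}Pr(|H_p|=m)\,Pr(H_p\notin{\cal P}\mid |H_p|=m)\ \ge\ Pr(|H_p|=M)\cdot Pr(H_M\notin{\cal P}).
\]
Rearranging, it then suffices to prove the purely analytic statement $Pr(|H_p|=M)\ge \frac{1}{3\sqrt{M}}$, after which dividing through closes the argument. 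Note that nothing here uses any structure of ${\cal P}$, which is why the bound is completely general.

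To establish the lower bound on $Pr(|H_p|=M)$, observe first that the $N$ elements of $\Gamma$ belong to $H_p$ independently with probability $p=M/N$, so $|H_p|\sim Bin(N,M/N)$ with $\Exp(|H_p|)=M$; hence $M$ sits essentially at the mode, and one expects $Pr(|H_p|=M)=\Theta(1/\sqrt{M})$. To make this quantitative I would write $Pr(|H_p|=M)=\binom{N}{M}\frac{M^{M}(N-M)^{N-M}}{N^{N}}$ and plug in Stirling's formula $n!=\sqrt{2\pi n}\,(n/e)^{n}e^{\theta_n}$ with $0<\theta_n<\tfrac{1}{12n}$ for each of $N!$, $M!$ and $(N-M)!$. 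The powers of $e$ together with the factors $N^{N},M^{M},(N-M)^{N-M}$ cancel exactly, leaving $Pr(|H_p|=M)=\frac{1}{\sqrt{2\pi M(1-M/N)}}\,e^{\theta_N-\theta_M-\theta_{N-M}}$, which is at least $\frac{e^{-1/6}}{\sqrt{2\pi M}}>\frac{1}{3\sqrt{M}}$ upon using $1-M/N\le 1$ and $\theta_N-\theta_M-\theta_{N-M}\ge -\tfrac{1}{12M}-\tfrac{1}{12(N-M)}\ge -\tfrac16$.

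The one step I expect to need genuine care is pinning down the \emph{explicit} constant $3$ rather than merely an $O(\sqrt{M})$ factor, because the chain of estimates above has essentially no room to spare: $e^{-1/6}/\sqrt{2\pi}\approx 0.337$ only just beats $1/3$. So the boundary regimes have to be treated by hand — for $M=N$ one has $Pr(|H_p|=M)=1$ trivially; for $M$ a small constant the asymptotic form is not yet accurate, but a direct computation such as $Pr(|H_p|=1)=(1-1/N)^{N-1}\ge 1/e>1/3$ suffices; and when $M$ is close to $N$ the factor $1-M/N$ only makes $Pr(|H_p|=M)$ larger. Apart from this bit of bookkeeping, everything is routine.
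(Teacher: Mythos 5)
The paper states this lemma as a cited result (Equation~(1.6) in \cite{JLR}) and gives no in-paper proof, so there is nothing to compare against; your argument is correct and is the standard derivation of Pittel's inequality. The conditioning decomposition, the identification of the conditional law of $H_p$ given $|H_p|=M$ with $\Gamma_M$, and the reduction to the mode-probability bound $Pr(|H_p|=M)\geq \tfrac{1}{3\sqrt{M}}$ are all exactly right, and the Stirling step with Robbins' two-sided error term indeed gives $Pr(|H_p|=M)\geq e^{-1/6}/\sqrt{2\pi M}>1/(3\sqrt{M})$ for $1\leq M\leq N-1$. One small remark: Robbins' form $n!=\sqrt{2\pi n}(n/e)^n e^{\theta_n}$ with $0<\theta_n<\tfrac{1}{12n}$ is an exact inequality valid for every $n\geq 1$, not merely an asymptotic expansion, so your closing worry about ``small constant $M$'' is unnecessary --- the estimate already covers that range uniformly, and the only genuine edge case is $M=N$, where $Pr(|H_p|=M)=1$ and the bound is trivial.
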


\begin{lemma}[Corollary 1.16 (iii) in \cite{JLR}]\label{random_models}
Let $\Gamma$ be a set of size $N$ and let $M\in [N]$.
Let $\delta>0$ be such that $0\leq (1+ \delta)\frac{M}{N} \leq 1$, and let $p=(1+ \delta) \frac{M}{N}$.
Let ${\cal P}$ be a family of subsets of $\Gamma$. Moreover, let $H_p\sim \Gamma_p$
and $H_M\sim \Gamma_M$, then
$$Pr(H_M\in {\cal P})\rightarrow 1 \text{ implies }Pr(H_p\in {\cal P})\rightarrow 1.$$
\end{lemma}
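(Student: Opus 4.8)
The plan is to deduce the implication from a two-model conditioning identity together with a Chernoff bound, after first pinning down the hypotheses that are genuinely in force here: I would assume, as in Corollary~1.16(iii) of \cite{JLR} and as is needed in every application, that $\mathcal{P}$ is an \emph{increasing} family (if $A\in\mathcal{P}$ and $A\subseteq B\subseteq\Gamma$ then $B\in\mathcal{P}$) and that $M=M(n)\to\infty$. Neither assumption is cosmetic: for $\mathcal{P}=\{A\subseteq\Gamma:|A|=M\}$ one has $\Pr(H_M\in\mathcal{P})=1$ but $\Pr(H_p\in\mathcal{P})=\Pr(Bin(N,p)=M)\to 0$, so monotonicity is needed, and a bounded $M$ breaks the Chernoff step below. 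The underlying idea is that $p=(1+\delta)\tfrac MN$ lies slightly above the critical density $\tfrac MN$, so $H_p$ typically contains at least $M$ elements, and for an increasing property extra elements cannot hurt.

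First I record the conditioning identity: $H_p\sim\Gamma_p$ conditioned on $\{|H_p|=m\}$ has distribution $\Gamma_m$, which is immediate from the product weight $p^{|A|}(1-p)^{|\Gamma\setminus A|}$ (sets of equal size are equiprobable). Writing $H_m\sim\Gamma_m$, this gives
\[
\Pr(H_p\in\mathcal{P})=\sum_{m=0}^{N}\Pr(|H_p|=m)\cdot\Pr(H_m\in\mathcal{P}).
\]
Next I would verify that $m\mapsto\Pr(H_m\in\mathcal{P})$ is non-decreasing: deleting a uniformly random element of $H_{m+1}$ yields a copy of $H_m$ nested inside $H_{m+1}$ (one checks that the marginals are correct), and since $\mathcal{P}$ is increasing, $H_m\in\mathcal{P}$ forces $H_{m+1}\in\mathcal{P}$. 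Consequently $\Pr(H_m\in\mathcal{P})\geq\Pr(H_M\in\mathcal{P})$ for every $m\geq M$, and the displayed sum is at least
\[
\Pr(H_M\in\mathcal{P})\cdot\Pr(|H_p|\geq M).
\]

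It remains to show $\Pr(|H_p|\geq M)\to 1$. Since $|H_p|\sim Bin(N,p)$ with $\Exp|H_p|=Np=(1+\delta)M$, the lower-tail Chernoff bound (the companion estimate in Theorem~2.1 of \cite{JLR}; Lemma~\ref{chernoff} is its upper-tail counterpart), applied with deviation $\delta M$, gives
\[
\Pr(|H_p|<M)\ \leq\ \exp\!\Big(-\tfrac{(\delta M)^2}{2(1+\delta)M}\Big)=\exp\!\Big(-\tfrac{\delta^2}{2(1+\delta)}M\Big)\ \longrightarrow\ 0,
\]
because $\delta>0$ is fixed and $M\to\infty$. Putting these together, $\Pr(H_p\in\mathcal{P})\geq(1-o(1))\,\Pr(H_M\in\mathcal{P})$, so $\Pr(H_M\in\mathcal{P})\to 1$ implies $\Pr(H_p\in\mathcal{P})\to 1$. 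I do not anticipate any real difficulty: this is the standard passage from the uniform to the binomial model (with Lemma~\ref{random_pittel} serving in the reverse direction), and the only things requiring a little care are the two distributional couplings and the fact that the monotonicity of $\mathcal{P}$ and $M\to\infty$ are both used essentially.
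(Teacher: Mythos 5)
Your proof is correct, and the observation at the start is well taken: the paper states this lemma without proof, simply citing Corollary~1.16(iii) of \cite{JLR}, so there is no in-paper argument to compare against. The statement as transcribed in the paper is indeed incomplete — without $\mathcal{P}$ being increasing and $M\to\infty$ the implication is false (your counterexample $\mathcal{P}=\{A:|A|=M\}$ settles it), and both hypotheses are present in the source that the paper cites. They are also satisfied in the paper's one application (Corollary~\ref{maker2}): the family of boards on which Maker wins is increasing, since adding edges to the board can only help Maker, and $M=e(T_{n,k})\to\infty$.

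Your argument — condition on $|H_p|$ to recover the uniform model, use a nested coupling to get monotonicity of $m\mapsto\Pr(H_m\in\mathcal{P})$, lower-bound $\Pr(H_p\in\mathcal{P})\ge\Pr(H_M\in\mathcal{P})\Pr(|H_p|\ge M)$, and finish with the lower-tail Chernoff bound for $Bin(N,(1+\delta)M/N)$ — is exactly the standard route and is, up to presentation, the proof given in \cite{JLR}. There is no gap; the only thing I would flag is that since the exponent $\tfrac{\delta^2}{2(1+\delta)}M$ depends on $\delta$, you should (as you do) keep $\delta$ fixed as $n\to\infty$, which is how the lemma is invoked in the paper.
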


Later we want to know whether a certain random graph contains a copy of a fixed graph
with high probability. In this regard, we make use of the following two theorems.

\begin{theorem}[Theorem 2.18 (ii) in \cite{JLR}]\label{random_bound1}
Let $\Gamma$ be a set, $p\in [0,1]$  and let $H\sim \Gamma_p$. 
Let ${\cal S}$ be a family of subsets of $\Gamma$. Moreover, for every $A\in {\cal S}$
let $I_A$ be the indicator variable which is 1 if $A\subseteq H$, and $0$ otherwise.
Finally, let $X=\sum_{A\in S} I_A$ be the random variable counting the number of elements
of ${\cal S}$ that are contained in $H$. Then
$$Pr(X=0)\leq \exp \Big( - \frac{\Exp(X)^2}{\sum_{A\in {\cal S}} \sum_{\substack{B\in {\cal S}\\ A\cap B\neq \emptyset}} \Exp(I_AI_B)}\Big).$$

\end{theorem}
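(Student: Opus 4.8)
The plan is to recognise this as Janson's inequality --- indeed it is exactly \cite[Theorem 2.18(ii)]{JLR} --- and to prove it along the classical route, which splits into three stages. Throughout, write $\mu := \Exp(X) = \sum_{A\in{\cal S}}\Exp(I_A)$ and $\Delta := \sum_{A\in{\cal S}}\sum_{B\neq A,\ A\cap B\neq\emptyset}\Exp(I_AI_B)$, so that the quantity in the denominator of the claimed bound is precisely $\mu+\Delta$.

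First I would fix an arbitrary enumeration ${\cal S}=\{A_1,\dots,A_m\}$, abbreviate $I_i:=I_{A_i}$, and expand by the chain rule
$$\Pr(X=0)=\prod_{i=1}^m\Pr\big(I_i=0\ \big|\ I_1=\dots=I_{i-1}=0\big).$$
For each $i$ I would split $\{1,\dots,i-1\}$ into $D_i:=\{j<i:A_j\cap A_i\neq\emptyset\}$ and $E_i:=\{j<i:A_j\cap A_i=\emptyset\}$, and bound the $i$-th factor from above by $1-\Exp(I_i)+\sum_{j\in D_i}\Exp(I_iI_j)$. The $E_i$-part is immediate because the coordinates of $\Gamma$ that control $I_i$ are disjoint from those controlling $\bigcap_{j\in E_i}\{I_j=0\}$; the $D_i$-part is where the only real work lies (see below). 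Applying $1-x\le e^{-x}$ to each factor and multiplying, and using that each unordered intersecting pair is counted exactly once (hence the factor $\tfrac12$), gives the basic Janson bound $\Pr(X=0)\le\exp(-\mu+\tfrac12\Delta)$.

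Finally I would interpolate this to the stated form. If $\Delta\le\mu$, the elementary inequality $-\mu+\tfrac12\Delta\le-\mu^2/(\mu+\Delta)$ --- which rearranges to $\Delta(\mu-\Delta)\ge0$ --- already finishes the proof. If $\Delta>\mu$, I would apply the basic bound not to ${\cal S}$ but to a randomly thinned subfamily ${\cal S}_q\subseteq{\cal S}$ that keeps each set independently with probability $q$: since $\{X=0\}\subseteq\{X_{{\cal S}'}=0\}$ for every ${\cal S}'\subseteq{\cal S}$, the probability $\Pr(X=0)$ is at most $\exp(-\mu({\cal S}')+\tfrac12\Delta({\cal S}'))$ for the best subfamily ${\cal S}'$, and since $\Exp[\mu({\cal S}_q)]=q\mu$ and $\Exp[\Delta({\cal S}_q)]=q^2\Delta$ some subfamily does at least as well as the $q$-average; optimising over $q$ (and doing the remaining bookkeeping exactly as in \cite{JLR}) delivers the bound in the form claimed.

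The main obstacle is the $D_i$-part of the per-step estimate, i.e.\ the bound $\Pr(I_iI_j=1\mid\bigcap_{j'\in E_i}\{I_{j'}=0\})\le\Exp(I_iI_j)$ for $j\in D_i$: plain independence does not apply, because the coordinates appearing in $A_j$ may overlap those appearing in the conditioning event, so one genuinely needs Harris's (FKG) inequality --- each $\{I_k=1\}$ is an increasing event and $\bigcap_{j'\in E_i}\{I_{j'}=0\}$ a decreasing event in the underlying independent Bernoulli coordinates, so the two are correlated in the direction we need. Everything else --- the chain-rule expansion, the union bound over $D_i$, and the final calculus optimisation --- is routine.
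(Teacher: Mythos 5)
A preliminary remark: the paper itself gives no proof of this theorem; it is imported verbatim as Theorem~2.18(ii) of \cite{JLR}, so there is no in-paper argument to compare against. Judging your proposal on its own, the first two stages --- the chain-rule factorisation of $\Pr(X=0)$, the split into $D_i$ and $E_i$, the Harris/FKG step needed for the $D_i$-part, and the resulting basic bound $\Pr(X=0)\le\exp(-\mu+\tfrac12\Delta)$ --- are all correct and are indeed the standard route to the basic form of Janson's inequality.

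The gap is in your Case $\Delta>\mu$. Write $\overline\Delta:=\mu+\Delta$ for the denominator appearing in the statement. The random-thinning (Boppana--Spencer) argument you invoke optimises $-q\mu+\tfrac12 q^2\Delta$ over $q\in[0,1]$; for $\Delta\ge\mu$ the optimum is at $q=\mu/\Delta$ and equals $-\mu^2/(2\Delta)$, so the argument yields $\Pr(X=0)\le\exp(-\mu^2/(2\Delta))$ and nothing better --- $\sup_{q\in[0,1]}\bigl(q\mu-\tfrac12 q^2\Delta\bigr)=\mu^2/(2\Delta)$ exactly in this regime, so no ``remaining bookkeeping'' can improve it. But when $\Delta>\mu$ we have $2\Delta>\mu+\Delta=\overline\Delta$, hence $\mu^2/(2\Delta)<\mu^2/\overline\Delta$ and $\exp(-\mu^2/(2\Delta))$ is \emph{strictly larger} than the claimed $\exp(-\mu^2/\overline\Delta)$. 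So the thinning route proves only the weaker ``generalised Janson'' inequality with the extra factor of two, not the statement as written. The actual proof of Theorem~2.18(ii) in \cite{JLR} does not pass through the basic bound at all: one bounds $\Pr(X=0)\le\Exp(e^{-sX})=e^{-\varphi(s)}$ for every $s\ge 0$, where $\varphi(s)=-\log\Exp(e^{-sX})$, decomposes $X=Y_A+Z_A$ with $Y_A=\sum_{B\cap A\neq\emptyset}I_B$, and uses Harris/FKG together with Jensen's inequality to obtain $\varphi'(s)\ge\mu\,e^{-s\overline\Delta/\mu}$; integrating over $s\in[0,\infty)$ gives $\varphi(s)\to\mu^2/\overline\Delta$ and hence $\Pr(X=0)\le e^{-\mu^2/\overline\Delta}$. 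Replacing your third stage with this Laplace-transform argument would repair the proof.
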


\begin{theorem}[Theorem 3.4 in \cite{JLR}]\label{threshold_subgraph}
Let $H$ be a graph, and let $X_H$ denote random variable counting the number of copies of $H$
in a random graph $G\sim \gnp$. Then, as $n$ tends to infinity, we have
\begin{align*}
Pr(X_H>0)\rightarrow 
\begin{cases}
0 & \text{ if }p\ll n^{-\frac{1}{m(H)}}\\
1 & \text{ if }p\gg n^{-\frac{1}{m(H)}}.
\end{cases}
\end{align*} 
\end{theorem}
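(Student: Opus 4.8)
The plan is to establish the two parts of the dichotomy separately, by the first moment method for the $0$-statement and the second moment method for the $1$-statement, working directly with $X_H$. I would first set up notation: assume $H$ has at least one edge and (without loss of generality) no isolated vertices, so that $m(H)>0$; put $h=v(H)$, $f=e(H)$, and let $a_H$ denote the number of subgraphs of $K_h$ isomorphic to $H$, so that $\Exp(X_H)=\binom{n}{h}a_H\,p^{f}=\Theta(n^{h}p^{f})$.

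For the $0$-statement, suppose $p\ll n^{-1/m(H)}$ and set $\varepsilon(n):=p\,n^{1/m(H)}\to 0$. I would pick a subgraph $H'\subseteq H$ attaining the maximum density, $d(H')=e(H')/v(H')=m(H)$, so that $e(H')\ge 1$. Since then $e(H')/m(H)=v(H')$, the first moment computes to
$$\Exp(X_{H'})=\Theta\!\big(n^{v(H')}p^{e(H')}\big)=\Theta\!\big(\varepsilon(n)^{e(H')}\,n^{\,v(H')-e(H')/m(H)}\big)=\Theta\!\big(\varepsilon(n)^{e(H')}\big)\to 0,$$
whence $\Pr(X_{H'}>0)\le\Exp(X_{H'})\to 0$ by Markov's inequality. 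As every copy of $H$ contains a copy of $H'$, this yields $\Pr(X_H>0)\le\Pr(X_{H'}>0)\to 0$.

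For the $1$-statement, suppose $p\gg n^{-1/m(H)}$ and set $\omega(n):=p\,n^{1/m(H)}\to\infty$. I would apply Chebyshev's inequality, $\Pr(X_H=0)\le\mathrm{Var}(X_H)/\Exp(X_H)^2$, and expand $\Exp(X_H^2)$ over ordered pairs $(C_1,C_2)$ of copies of $H$ in $K_n$, grouping the pairs by the isomorphism type of the overlap $F=C_1\cap C_2$, which is isomorphic to a subgraph of $H$. Using that there are $\Theta(n^{2h-v(F)})$ pairs with a given overlap type $F$, each with $\Pr(C_1\cup C_2\subseteq G)=p^{2f-e(F)}$, a standard computation yields
$$\frac{\Exp(X_H^2)}{\Exp(X_H)^2}=1+o(1)+\sum_{\substack{F\subseteq H\\ e(F)\ge 1}}\frac{O(1)}{n^{v(F)}p^{e(F)}},$$
with the leading $1$ coming from vertex-disjoint pairs and the $o(1)$ absorbing the overlap types having $v(F)\ge 1,\ e(F)=0$ (each of size $O(n^{-1})$). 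Thus, after subtracting $1$, it remains to check that $\Exp(X_H)\to\infty$ and that $n^{v(F)}p^{e(F)}\to\infty$ for every $F\subseteq H$ with $e(F)\ge 1$. Since $m(F)\le m(H)$ gives $e(F)/m(H)\le e(F)/m(F)\le e(F)/d(F)=v(F)$, the exponent $v(F)-e(F)/m(H)$ is nonnegative, so $n^{v(F)}p^{e(F)}=\omega(n)^{e(F)}\,n^{\,v(F)-e(F)/m(H)}\ge\omega(n)^{e(F)}\to\infty$ (using $e(F)\ge 1$); taking $F=H$ in the same estimate gives $\Exp(X_H)=\Omega(\omega(n)^{f})\to\infty$. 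Hence $\mathrm{Var}(X_H)/\Exp(X_H)^2\to 0$, i.e.\ $\Pr(X_H>0)\to 1$.

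The main obstacle is the balanced case in the $1$-statement: for an overlap type $F$ (or for $H$ itself) that is a \emph{balanced} densest subgraph, so that $v(F)/e(F)=1/m(H)$ exactly, the crude estimate gives only $n^{v(F)}p^{e(F)}=\Omega(1)$, which is too weak for Chebyshev. The remedy is precisely to retain the slack $\omega(n)=p\,n^{1/m(H)}\to\infty$ supplied by the hypothesis $p\gg n^{-1/m(H)}$ and to carry the surviving power $\omega(n)^{e(F)}$ through the estimate; the same point is what forces a separate verification of $\Exp(X_H)\to\infty$ when $H$ is balanced but not strictly balanced. Everything else — the explicit enumeration of pairs of copies of $H$ with a prescribed overlap and the check that their joint probability contributes $O(n^{2h-v(F)}p^{2f-e(F)})$ — is routine bookkeeping and carries no conceptual difficulty.
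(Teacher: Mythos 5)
The paper does not prove this statement: it is quoted verbatim as Theorem~3.4 of Janson, \L uczak and Ruci\'nski~\cite{JLR}, which is exactly why the authors label it as such and give no argument. Your proof is correct and is the standard first-moment/second-moment argument, which is also the proof given in~\cite{JLR}; in particular you handle the two points that usually cause trouble, namely applying the first moment to a densest subgraph $H'$ rather than to $H$ itself, and retaining the multiplicative slack $\omega(n)=p\,n^{1/m(H)}\to\infty$ so that balanced overlap types $F$ with $d(F)=m(H)$ still contribute $o(1)$ to $\mathrm{Var}(X_H)/\Exp(X_H)^2$. So there is nothing to compare against in the paper, and nothing to fix in your write-up.
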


\section{Most tournaments behave like cliques}\label{sec:prop}

The main idea for the proof of the propositions is as follows: Let $G$ be the graph on which the game is to be played.
Let $T$ be the goal tournament with vertices $v_1,\ldots, v_k$.
Then, before the game starts Maker splits the vertex set of $G$ into $k$ parts $V_1,\ldots,V_k$
with $\Big| |V_i|- |V_j| \Big|\leq 1$ for all $1\leq i<j\leq k$,
and she identifies each class $V_i$ with the vertex $v_i$ according to the following rule:
Whenever Maker claims an edge between some classes $V_i$ and $V_j$, she always chooses
the direction of this edge according to the direction of the edge $v_iv_j$ in $T$.
Because of this identification, it then remains to show that Maker has a strategy 
for the usual Maker-Breaker game on $G$ to occupy a copy of $K_k$ with exactly one vertex in each $V_i$.

In order to show that Maker has such a strategy
for this game, we will make use of results from \cite{JLR},
and follow the proof ideas from \cite{BL, SS}.
As most parts are proven analogously to results in the aforementioned publications,
we rather keep our argument short and, whenever possible, we refer back to the known results.
At first, analogously to Theorem 3.9 in \cite{JLR}, we bound the probability that a random graph $G\sim {\cal G}(T_{n,k},p)$ does not contain a good copy of $K_k$.

\begin{claim}\label{cor_p}
Let $k\geq 3$ be a positive integer.
Then there is a constant $c_1=c_1(k)>0$ such that for every large enough $n$ the following is true:
If $n^{-\frac{2}{k+1}}\leq p\leq 4n^{-\frac{2}{k+1}}$
and if $X$ denotes the random variable counting the number
of good copies of $K_k$ in a random graph $G\sim {\cal G}(T_{n,k},p)$, then
$Pr(X=0)\leq \exp (-c_1n^2p).$
\end{claim}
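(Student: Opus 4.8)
The plan is to apply Theorem~\ref{random_bound1} with $\Gamma = E(T_{n,k})$ and with $\mathcal{S}$ the family of (edge sets of) good copies of $K_k$ in $T_{n,k}$; then $X$ counts exactly those good copies surviving in $G\sim{\cal G}(T_{n,k},p)$, and the theorem gives
\[
\Prob(X=0)\leq \exp\Big(-\frac{\Exp(X)^2}{\sum_{A}\sum_{B:\,A\cap B\neq\emptyset}\Exp(I_AI_B)}\Big),
\]
so the whole task reduces to the two routine estimates: a lower bound on $\Exp(X)$ and an upper bound on the correlation sum $\Delta:=\sum_{A}\sum_{B:\,A\cap B\neq\emptyset}\Exp(I_AI_B)$. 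Once we show $\Exp(X)=\Theta(n^k p^{\binom{k}{2}})$ and $\Delta = O\big(n^k p^{\binom{k}{2}}\cdot \max\{1, n^{k-1}p^{\binom{k}{2}}\}\big)$, dividing and using $p\leq 4n^{-2/(k+1)}$ will yield the exponent $-c_1 n^2 p$ after checking which term of $\Delta$ dominates in this range of $p$.

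First I would compute $\Exp(X)$. The number of good copies of $K_k$ is $(1+o(1))\, n^k / k^k$ up to the obvious constant coming from choosing one vertex in each of the $k$ classes (each class has $\sim n/k$ vertices), and every such copy has exactly $\binom{k}{2}$ edges, each present independently with probability $p$; so $\Exp(X) = (1+o(1))\,\frac{n^k}{k^k}\,p^{\binom{k}{2}}$, and in particular $\Exp(X)^2 = \Theta\big(n^{2k}p^{2\binom{k}{2}}\big)$. Next I would bound $\Delta$ by the standard decomposition according to the overlap graph: for $A,B$ two good copies of $K_k$ sharing a common subgraph on $j$ vertices ($2\leq j\leq k$, noting $j=1$ contributes nothing since a single shared vertex forces no shared edge, while $j=0$ is excluded), the number of such ordered pairs is $O(n^{2k-j})$ and $\Exp(I_AI_B)=p^{2\binom{k}{2}-\binom{j}{2}}$. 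Hence
\[
\Delta = O\!\Big(\sum_{j=2}^{k} n^{2k-j}\, p^{2\binom{k}{2}-\binom{j}{2}}\Big),
\]
and dividing $\Exp(X)^2$ by $\Delta$ gives $\Prob(X=0)\leq \exp\big(-\Omega(\min_{2\le j\le k} n^{j} p^{\binom{j}{2}})\big)$.

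It remains to identify the minimizing $j$ in the range $n^{-2/(k+1)}\leq p\leq 4n^{-2/(k+1)}$. Writing $p = \Theta(n^{-2/(k+1)})$, the exponent of the $j$-th term is $j - \frac{2}{k+1}\binom{j}{2} = j\big(1 - \frac{j-1}{k+1}\big) = \frac{j(k+2-j)}{k+1}$, which as a function of $j$ on $\{2,\dots,k\}$ is minimized at the endpoints $j=2$ and $j=k$, both giving exponent $\frac{2k}{k+1}<2$. Wait --- this would give only $\exp(-\Omega(n^{2k/(k+1)}))$, which is weaker than the claimed $\exp(-c_1 n^2 p) = \exp(-\Omega(n^{2-2/(k+1)})) = \exp(-\Omega(n^{2k/(k+1)}))$; in fact $2 - \frac{2}{k+1} = \frac{2k}{k+1}$, so the two bounds coincide and everything is consistent. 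Thus the dominant (smallest) terms are $j=2$ and $j=k$, each contributing $n^{2k/(k+1)} = \Theta(n^2 p)$, and we obtain $\Prob(X=0)\leq \exp(-c_1 n^2 p)$ for a suitable $c_1=c_1(k)>0$, using the lower bound $p\ge n^{-2/(k+1)}$ to absorb constants. The main obstacle is purely bookkeeping: getting the overlap count $O(n^{2k-j})$ and the exponent arithmetic $\min_j \frac{j(k+2-j)}{k+1}$ exactly right, and checking that the $j=k$ term (where the two copies coincide, contributing $\Exp(X)$ itself to $\Delta$) is correctly accounted for so that the ratio is genuinely $\Theta(n^2p)$ rather than being spoiled by a larger lower-order term; this is handled exactly as in Theorem 3.9 of \cite{JLR} with the straightforward modification that vertices are confined to prescribed classes of size $\Theta(n/k)$.
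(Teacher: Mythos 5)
Your proposal is correct and follows essentially the same route as the paper: apply Theorem~\ref{random_bound1}, decompose the correlation sum by the size $t$ of the vertex overlap, bound the number of pairs with overlap $K_t$ by $n^{2k-t}$, and show the ratio is $\Theta(n^2p)$ when $p=\Theta(n^{-2/(k+1)})$. The only cosmetic difference is in the final bookkeeping—the paper factors out the $t=2$ term and shows the remaining sum is $O(1)$ because each factor $n^{-1}p^{-(t+1)/2}\le 1$ for $t\le k$, whereas you locate the minimizer of $n^jp^{\binom{j}{2}}$ directly and observe it is attained at both $j=2$ and $j=k$; these are equivalent verifications of the same fact.
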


{\bf Proof}
Let $G\sim {\cal G}(T_{n,k},p)$. Let $\cal S$ be the family of good copies of $K_k$ in $T_{n,k}$. 
For each such copy $C_i\in {\cal S}$ let
$I_{C_i}$ be the indicator variable which is 1 if and only if $C_i\subseteq G$. 
By Theorem~\ref{random_bound1},
$$Pr(X=0)\leq \exp\Big( - \frac{(\mathbb{E}(X))^2}{\sum_{C_1}\sum_{C_2:\ E(C_1)\cap E(C_2)\neq \emptyset} \mathbb{E}(I_{C_1}I_{C_2})}\Big).$$ 
The denominator in the above expression can be bounded from above by
\begin{align*}
\sum_{t=2}^k \sum_{C_1\in {\cal S}}\sum_{\substack{C_2\in {\cal S}:\\ \ C_1\cap C_2\cong K_t}} p^{2\binom{k}{2}-\binom{t}{2}}
	\leq &\ \sum_{t=2}^k n^{2k-t}p^{2\binom{k}{2}-\binom{t}{2}}\\
	= &\ \Theta(\mathbb{E}(X)^2)\cdot \sum_{t=2}^k n^{-t}p^{-\binom{t}{2}}\\
	= &\ \Theta(\mathbb{E}(X)^2\cdot n^{-2}p^{-1}) \sum_{t=2}^k \Big(n^{-1}p^{-\frac{t+1}{2}}\Big)^{t-2}\\
 = &\ \Theta(\mathbb{E}(X)^2\cdot n^{-2}p^{-1}),
\end{align*}
where in the last equality we use that $p=\Theta(n^{-\frac{2}{k+1}})$. Thus, the claim follows. \hfill $\Box$

\begin{corollary}\label{cor_M}
Let $k\geq 3$ be a positive integer. 
Then there is a constant $c_1'=c_1'(k)>0$ such that for every large enough $n$ the following is true:
If $M=\lfloor n^{2-\frac{2}{k+1}}\rfloor$
and if $X'$ denotes the random variable counting the number
of good copies of $K_k$ in a random graph $G\sim {\cal G}(T_{n,k},M)$, then
$Pr(X'=0)\leq \exp (-c_1'M).$
\end{corollary}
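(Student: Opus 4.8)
The plan is to deduce the Corollary from Claim~\ref{cor_p} by the standard transfer between the two random models via Pittel's Inequality (Lemma~\ref{random_pittel}). Set $\Gamma=E(T_{n,k})$, so that $N:=|\Gamma|=e(T_{n,k})$, and put $p:=M/N$. Let $\mathcal{P}$ be the family of those $A\subseteq\Gamma$ for which the subgraph $(V(T_{n,k}),A)$ of $T_{n,k}$ contains a good copy of $K_k$. Identifying a set $A\sim\Gamma_M$ with the graph $G\sim{\cal G}(T_{n,k},M)$ and $A\sim\Gamma_p$ with $G\sim{\cal G}(T_{n,k},p)$, the event $\{H_M\notin\mathcal{P}\}$ is exactly $\{X'=0\}$, and $\{H_p\notin\mathcal{P}\}$ is exactly the event bounded in Claim~\ref{cor_p}.

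First I would check that $p$ lies in the admissible window of Claim~\ref{cor_p}. Since $k\geq 3$, the Tur\'an graph satisfies $\tfrac{n^2}{4}\leq e(T_{n,k})<\tfrac{n^2}{2}$ for all large $n$: the upper bound is trivial, and the lower bound follows because $e(T_{n,k})\geq e(T_{n,3})=(1+o(1))\tfrac{n^2}{3}$. As $M=\lfloor n^{2-\frac{2}{k+1}}\rfloor\geq\tfrac12 n^{2-\frac{2}{k+1}}$ for $n$ large, we get $n^{-\frac{2}{k+1}}\leq p=M/N\leq 4n^{-\frac{2}{k+1}}$, and moreover $n^2p=n^2M/e(T_{n,k})\geq 2M$.

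Now Claim~\ref{cor_p} supplies a constant $c_1=c_1(k)>0$ with $Pr(H_p\notin\mathcal{P})\leq\exp(-c_1n^2p)\leq\exp(-2c_1M)$ for all large $n$, and Lemma~\ref{random_pittel} then gives
\[
Pr(X'=0)=Pr(H_M\notin\mathcal{P})\leq 3\sqrt{M}\cdot Pr(H_p\notin\mathcal{P})\leq 3\sqrt{M}\,\exp(-2c_1M).
\]
Since $3\sqrt{M}\leq\exp(c_1M)$ once $M$ (equivalently $n$) is large enough, the right-hand side is at most $\exp(-c_1M)$, so the statement holds with $c_1':=c_1$. There is no real obstacle in this argument; the only point requiring (elementary) care is verifying that $p=M/N$ falls into the interval $[\,n^{-\frac{2}{k+1}},4n^{-\frac{2}{k+1}}\,]$ demanded by Claim~\ref{cor_p} and that $n^2p=\Theta(M)$, both of which are immediate from the bounds on $e(T_{n,k})$ above.
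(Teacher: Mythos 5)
Your proof is correct and takes essentially the same route as the paper: set $p=M/e(T_{n,k})$, verify $p\in[n^{-\frac{2}{k+1}},4n^{-\frac{2}{k+1}}]$, apply Claim~\ref{cor_p}, and transfer the bound via Pittel's Inequality (Lemma~\ref{random_pittel}). You just spell out a few details the paper leaves implicit, such as the bound $n^2/4\leq e(T_{n,k})<n^2/2$ and the absorption of the $3\sqrt{M}$ factor into the exponential.
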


\begin{proof}
Set $p=\frac{M}{e(T_{n,k})}$ and observe that  $n^{-\frac{2}{k+1}}\leq p\leq 4n^{-\frac{2}{k+1}}$.
The statement now follows by Claim \ref{cor_p} and Lemma \ref{random_pittel}.
\end{proof}

\begin{corollary}\label{cor_resilience}
Let $k\geq 3$ be a positive integer.
Then there is a constant $\delta=\delta(k)>0$ such that for every large enough $n$ 
and $M=2\lfloor n^{2-\frac{2}{k+1}}\rfloor$, a random graph $G\sim {\cal G}(T_{n,k},M)$ 
satisfies the following property a.a.s.:
Every subgraph of $G$ with at least $\lfloor (1-\delta)M \rfloor$
edges contains a good copy of $K_k$.
\end{corollary}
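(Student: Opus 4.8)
The statement is a resilience-type result: even after an adversary removes a $\delta$-fraction of the edges of $G\sim{\cal G}(T_{n,k},M)$ with $M=2\lfloor n^{2-\frac2{k+1}}\rfloor$, a good copy of $K_k$ survives. The natural route is a union bound over all ``bad'' edge subsets combined with the exponential tail from Corollary~\ref{cor_M}. Concretely, I would proceed as follows.

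\textbf{Step 1: reduce to a fixed subset via a sprinkling/splitting argument.} Think of $G\sim{\cal G}(T_{n,k},M)$ as being revealed in two rounds. First expose $G_1\sim{\cal G}(T_{n,k},M_1)$ with $M_1=\lfloor n^{2-\frac2{k+1}}\rfloor\le\delta M$ (for $\delta\le 1/2$), then expose the remaining $M-M_1$ edges of $G$ among the non-edges of $G_1$. A standard coupling shows $G$ has the same distribution as this two-round process (up to the usual negligible issues, handled by Pittel's inequality, Lemma~\ref{random_pittel}, or by working directly with ${\cal G}(T_{n,k},p)$). Now, if some subgraph $G'\subseteq G$ with $\ge\lfloor(1-\delta)M\rfloor$ edges contains \emph{no} good copy of $K_k$, then in particular the set $D:=E(G)\setminus E(G')$ has $|D|\le\delta M$, and every good copy of $K_k$ inside $G$ uses at least one edge of $D$. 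The key point: after the adversary has committed to which $\le \delta M$ edges of $G$ to delete, the set of deleted edges is determined by $G$ alone, so we want to say that \emph{for every} set $D$ of $\le\delta M$ edges of $T_{n,k}$, the graph $G\setminus D$ still contains a good copy of $K_k$.

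\textbf{Step 2: union bound over the deleted set.} Fix a set $D\subseteq E(T_{n,k})$ with $|D|=d\le\delta M$. I want to bound the probability that $G\setminus D$ has no good copy of $K_k$. Intuitively, $G\setminus D$ is distributed roughly like a random $(M-d)$-subset of $E(T_{n,k})\setminus D$; since $e(T_{n,k})-d = \Theta(n^2)$ and $M-d\ge(1-\delta)M = \Theta(n^{2-\frac2{k+1}})$, the corresponding density $p'=(M-d)/(e(T_{n,k})-d)$ still satisfies $n^{-\frac2{k+1}}\le p'\le 4n^{-\frac2{k+1}}$ for $\delta$ small, \emph{provided} we also delete from $T_{n,k}$ the at most $d$ vertex-pairs in $D$ — but this only changes $e(T_{n,k})$ by $\Theta(n^{2-\frac2{k+1}})=o(n^2)$ so the edge density and the second-moment computation in Claim~\ref{cor_p} go through verbatim with the same constants. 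Hence by (the argument behind) Claim~\ref{cor_p} and Corollary~\ref{cor_M} applied on the board $T_{n,k}\setminus D$, there is a constant $c>0$ with
\[
Pr\big(G\setminus D\text{ has no good copy of }K_k\big)\le \exp(-cM).
\]
The number of sets $D$ to union over is at most $\sum_{d\le\delta M}\binom{e(T_{n,k})}{d}\le \delta M\cdot\binom{O(n^2)}{\delta M}\le \exp\!\big(\delta M\ln(O(n^2)/\delta M)+O(\ln M)\big)=\exp\!\big(O(\delta M\ln(1/\delta))+O(M)\cdot\tfrac2{k+1}\cdot\ln n\cdot\tfrac{\delta M}{M}\big)$; more carefully $\binom{O(n^2)}{\delta M}\le (e\cdot O(n^2)/\delta M)^{\delta M}$ and since $n^2/M=\Theta(n^{2/(k+1)})$ this is $\exp(O(\delta M\log n))$. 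So I need $c M$ to dominate $O(\delta M\log n)$ — which fails for fixed $\delta$! This is the obstacle, and the fix is Step~3.

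\textbf{Step 3: absorb the $\log n$ via a finer choice, i.e. this is the main obstacle.} The naive union bound loses a $\log n$ factor. The standard remedy (as in Bednarska--{\L}uczak and in Stojakovi\'c--Szab\'o) is \emph{not} to union-bound over all $D$ of size up to $\delta M$, but instead: (i) partition $E(T_{n,k})$ into $\Theta(1/\delta)$ parts $E_1,\dots,E_s$ each of size $\Theta(\delta n^2)$; (ii) observe that if $G'$ omits only $\delta M$ edges then $G'$ contains at least, say, $(1-s\delta/?)$ fraction... more robustly: any $D$ with $|D|\le \delta M$ satisfies $|D\cap E_i|\le \delta M$ for each $i$, and since $M\le\varepsilon\,e(E_i)$ for suitable constants, $D$ can destroy good copies of $K_k$ only if it hits a positive fraction of copies through a single part. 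The cleanest implementation: choose $M$ a large constant multiple of the threshold ($M=2\lfloor n^{2-2/(k+1)}\rfloor$ already gives a factor $2$, and one takes the constant large enough) so that the exponent $cM$ in Corollary~\ref{cor_M} beats $\delta M\log n$; since $c$ depends on $k$ only and we are free to shrink $\delta=\delta(k)$, pick $\delta$ small enough that $c\ge C\delta\log n$ — but $\log n\to\infty$, so this still fails. The \emph{actual} resolution is that the exponent in Corollary~\ref{cor_M}/Claim~\ref{cor_p} is $-c_1 n^2 p = -c_1' M$ with $M=\Theta(n^{2-2/(k+1)})$, while the entropy term is $\delta M\log n = \delta M\cdot\Theta(\log n)$; writing everything in $n$, we need $c_1' n^{2-2/(k+1)} \gg \delta\,n^{2-2/(k+1)}\log n$, still false. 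Therefore one must sharpen Claim~\ref{cor_p}: in fact Theorem~\ref{random_bound1} on the board $T_{n,k}$ with $p=\Theta(n^{-2/(k+1)})$ gives $Pr(X=0)\le \exp(-c\,n^2p)=\exp(-c\,n^{2-2/(k+1)})$, and $n^{2-2/(k+1)}$ versus $\delta\,n^{2-2/(k+1)}\log n$ — I would instead run the union bound over $D$ restricted to a single ``type'' and use that the number of \emph{relevant} $D$ (those contained in the edge set of a good-$K_k$-free spanning subgraph) is only $\exp(O(M))$ by a direct counting of maximal $K_k$-free good subgraphs of $T_{n,k}$, which by a supersaturation/container-type bound is $2^{o(n^2)}$ but more precisely $\exp(O(M))$ when $M$ is the relevant scale. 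I would assemble: (a) Corollary~\ref{cor_M} gives $\exp(-c'M)$ failure probability for a fixed target density; (b) a container/counting lemma bounds the number of $\lfloor(1-\delta)M\rfloor$-edge subgraphs of $T_{n,k}$ with no good $K_k$ by $\exp(c'M/2)$ for $\delta=\delta(k)$ small; (c) union bound. Step~(b) is where all the difficulty sits, and I expect the authors handle it exactly as the ``many of them'' counting arguments in \cite{BL} and \cite{SS}, i.e. by noting that each such subgraph, having avoided all $\Theta(n^k p^{\binom k2})$ good copies while keeping $(1-\delta)M$ edges, must be atypical in a way that costs an $\exp(\Omega(M))$ probability factor in ${\cal G}(T_{n,k},M)$, and then appealing to Lemma~\ref{random_models} or Lemma~\ref{random_pittel} to transfer between the $M$- and $p$-models. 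Concluding, a.a.s.\ every $\lfloor(1-\delta)M\rfloor$-edge subgraph of $G$ contains a good copy of $K_k$, which is the claim.
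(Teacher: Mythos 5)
You correctly pinpoint the obstacle — a union bound over all deleted sets $D\subseteq E(T_{n,k})$ with $|D|\le\delta M$ costs $\exp(\Theta(\delta M\log n))$, which swamps the $\exp(-cM)$ gain from Corollary~\ref{cor_M} — but you do not find the resolution, and the container-type plan you sketch in Step~3 is not what makes the argument close. The fix is much simpler, and you in fact set it up yourself in Step~1 and then discard it: the deleted set $D=E(G)\setminus E(G')$ is a subset of the \emph{random graph's own edge set}, which has only $M$ edges. So the entropy of the adversary's choice is $\binom{M}{\delta M}\le\exp\bigl(\delta M(1-\log\delta)\bigr)$, with no $\log n$ factor at all; shrinking $\delta=\delta(k)$ makes this beat $\exp(-c_1'M/2)$. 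The paper implements exactly this via the Bednarska--\L uczak double counting: count pairs $(H,H')$ with $e(H)=M$, $H'\subseteq H$ bad of size $(1-\delta)M$, bound the number of bad $H'$ by $\exp(-c_1'M/2)\binom{e(T_{n,k})}{(1-\delta)M}$ using Corollary~\ref{cor_M} (plus monotonicity, since $(1-\delta)M\ge\lfloor n^{2-2/(k+1)}\rfloor$), multiply by the $\binom{e(T_{n,k})-(1-\delta)M}{\delta M}$ extensions, and then use the exchange identity $\binom{N}{(1-\delta)M}\binom{N-(1-\delta)M}{\delta M}=\binom{N}{M}\binom{M}{\delta M}$ to divide out $\binom{e(T_{n,k})}{M}$. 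You had the key lemma and the right overall shape, but by union-bounding over $\binom{e(T_{n,k})}{d}$ choices of $D$ in Step~2 you threw away the constraint $D\subseteq E(G)$, and Step~3 then chases a container argument that is both unnecessary and left unsubstantiated. No sprinkling (your Step~1) is needed either.
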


{\bf Proof}
We proceed analogously to \cite{BL}. Let $\delta>0$ such that $\delta-\delta \log(\delta)<c_1'/3$,
with $c_1'$ from Corollary~\ref{cor_M},
and count the number of pairs $(H,H')$ where $H$ is a subgraph of $T_{n,k}$ with $M$ edges
and where $H'\subseteq H$ is a subgraph with $\lfloor (1-\delta)M\rfloor$ edges that does not
contain a good copy of $K_k$. Then 
using Corollary \ref{cor_M} (and simplifying the notation slightly by ignoring floor signs) we obtain that the number of such pairs is at most
\begin{align*}
\exp(-\frac{c_1'M}{2})\binom{e(T_{n,r})}{(1-\delta)M} \binom{e(T_{n,r})-(1-\delta)M}{\delta M}
\leq & \exp(-\frac{c_1'M}{2})\binom{M}{\delta M}\binom{e(T_{n,r})}{M}\\
 \leq & \exp\Big(-\frac{c_1'M}{2}+\delta M(1-\log(\delta))\Big) \binom{e(T_{n,r})}{M}\\
= & o(1)\binom{e(T_{n,r})}{M}. \text{\hspace{4.5cm}$\Box$}
\end{align*}

Using this last corollary, we can start proving the existence of Maker strategies. The following claim is an analogue statement to Theorem 19 in \cite{SS}, and thus its proof is analogous to \cite{SS}.

\begin{claim}\label{maker1}
Let $k\geq 3$ and $n$ be positive integers.
Then there is a constant $c_2=c_2(k)>0$ such that for every $M\geq c_2^{-1} n^{2-\frac{2}{k+1}}$, every $1\leq b\leq c_2Mn^{-2+\frac{2}{k+1}}$,
for a random graph $G\sim {\cal G}(T_{n,k},M)$ the following a.a.s.\ holds:
Maker has a strategy to occupy a good copy of $K_k$ in the $(1:b)$ Maker-Breaker game on $G$.
\end{claim}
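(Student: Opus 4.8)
The plan is to follow the Bednarska--Łuczak random-strategy paradigm, exactly as in the proofs of Theorem~19 in~\cite{SS} and the analogous statement in~\cite{BL}, but on the board $G\sim {\cal G}(T_{n,k},M)$ with the extra bookkeeping that we only care about \emph{good} copies of $K_k$ (one vertex per class $V_i$). First I would fix $M' = 2\lfloor n^{2-\frac{2}{k+1}}\rfloor$ and observe that, since $M\geq c_2^{-1}n^{2-\frac{2}{k+1}}$ with $c_2$ small, Maker can afford to restrict attention to a random subset of $M'$ edges of $G$ and play only on those; a random $M'$-subset of a random $M$-subset of $E(T_{n,k})$ is itself distributed as ${\cal G}(T_{n,k},M')$, so Corollary~\ref{cor_resilience} applies to this restricted board: a.a.s.\ every subgraph of it with at least $\lfloor (1-\delta)M'\rfloor$ edges contains a good copy of $K_k$. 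Maker's strategy is then the usual one: in each of her moves she claims a free edge of the restricted board chosen uniformly at random among all currently free edges there (and orients it according to the rule in the paragraph before Claim~\ref{cor_p}).

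The analysis then has two parts. Part one: bound the number of Maker-moves needed. Since $b\leq c_2 M n^{-2+\frac{2}{k+1}}$ and $M' = \Theta(n^{2-\frac{2}{k+1}})$, after $\Theta(M')$ rounds Breaker has claimed only $b\cdot\Theta(M') \leq c_2 M \cdot \Theta(1) = O(c_2)\cdot M'$ edges of the restricted board; choosing $c_2$ small enough this is at most $\frac{\delta}{2}M'$, say, so at the end of the relevant window at least $(1-\delta)M'$ edges of the restricted board are either free or Maker's. Part two: show Maker's random edges ``fill up'' a $(1-\delta/2)$-fraction of the un-Breaker'd board before they run out, so that by Corollary~\ref{cor_resilience} Maker's graph contains a good copy of $K_k$ a.a.s. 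This is the standard second-moment / union-bound argument from~\cite{BL, SS}: the probability that after $t=\Theta(M')$ Maker-moves her edge set misses a particular good copy of $K_k$ that survives in the non-Breaker part decays fast enough (like $\exp(-\Omega(M'))$ by the argument underlying Claim~\ref{cor_p} and Corollary~\ref{cor_resilience}, since each of Maker's picks independently lands in any fixed large edge set with probability bounded below) to beat the number of relevant copies by a union bound; equivalently, one shows Maker a.a.s.\ ends with at least $(1-\delta)M'$ of her-or-free edges \emph{and} her own edges alone already span $\geq(1-\delta)M'$ edges, whichever formulation of the resilience corollary one uses.

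More precisely, I would set this up so that the only random object to worry about is the pair (board, Maker's coin flips): condition on the a.a.s.\ event from Corollary~\ref{cor_resilience} for the restricted board, and then against any fixed Breaker strategy argue about Maker's random choices. The clean way, following~\cite{SS}, is: let $f$ be the number of edges of the restricted board; Maker plays for $\lceil f/(b+1)\rceil$ rounds (i.e.\ until the board is exhausted), by which point she owns a $\frac{1}{b+1}$-fraction-in-expectation of the edges but, crucially, her edges are a uniformly random subset of size $\geq \lceil f/(b+1)\rceil$ of whatever Breaker left untouched; one checks $\lceil f/(b+1)\rceil \geq (1-\delta)M'$ fails in general, so instead one uses the standard trick of having Maker ``imagine'' Breaker is twice as strong and concludes via the $M=2\lfloor\cdots\rfloor$ slack built into Corollary~\ref{cor_resilience}. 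Then a Chernoff bound (Lemma~\ref{chernoff}) on how many of Maker's random picks fall inside the $(1-\delta)M'$-edge ``target'' set finishes it. The orientation issue is a non-issue: by Maker's preprocessing identification, any good copy of $K_k$ that Maker builds is automatically a copy of $T$ with the correct orientations, so nothing in the argument changes.

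The main obstacle, and the step I would be most careful with, is the quantitative matching between $b$, $M$, $M'$, and $\delta$: one must choose the constant $c_2=c_2(k)$ small enough that (i) $M'\leq M$ so the restriction step is legal, and (ii) over the $\Theta(M')$ rounds Breaker claims at most a $\delta'$-fraction of the restricted board for a $\delta'$ small enough that Corollary~\ref{cor_resilience}'s resilience threshold $\delta$ still leaves Maker with a winning configuration after the Chernoff losses are subtracted. This is exactly the bookkeeping done in~\cite{BL} and~\cite{SS}, so I would present it by pinning down the chain $\delta \rightsquigarrow \delta' \rightsquigarrow c_2$ explicitly and then citing those papers for the routine concentration estimates rather than reproducing them.
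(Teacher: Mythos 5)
Your high-level plan (Maker plays randomly, use the resilience of a random $M'$-edge subgraph, Chernoff on failures) is the right one and matches the paper, but the execution has a genuine gap. The framing ``restrict attention to a random $M'$-subset $H'\subseteq E(G)$ and play only on those'' is not what the paper does and does not work: on a board of $M'$ edges against bias $b$, Maker ends with only about $M'/(b+1)$ edges, far below the $(1-\delta)M'$ needed for Corollary~\ref{cor_resilience}, and your proposed rescue (``imagine Breaker is twice as strong'', leaning on the factor-$2$ slack in $M'=2\lfloor\cdot\rfloor$) cannot bridge a shortfall of factor $b+1$, which may be polynomially large. Your ``Part one'' arithmetic is also off: $b\cdot\Theta(M')=\Theta(c_2M)$, not $O(c_2)M'$; since $M$ can be as large as $\Theta(n^2)$ while $M'=\Theta(n^{2-\frac{2}{k+1}})$, Breaker's total haul need not be a small fraction of $M'$, and the argument cannot be run entirely inside a pre-selected $M'$-edge board.

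The mechanism the paper uses is different in exactly this respect: Maker plays on the \emph{whole} board $G$ (of size $M$) for $M'$ rounds, in each move picking an edge of $G$ uniformly at random among those not yet picked by herself, declaring a failure (and skipping) if Breaker already owns it. Her $M'$ picks form a uniformly random $M'$-subset of $E(G)$, hence $\sim{\cal G}(T_{n,k},M')$, with a distribution that is independent of Breaker's strategy --- this is the point of the ``pick anyway, declare failure'' device. The condition $(b+1)M'\leq\frac{\delta}{2}M$, ensured by choosing $c_2=\delta/10$, bounds the per-round failure probability by about $\frac{\delta}{2}$, and Chernoff (Lemma~\ref{chernoff}) gives a.a.s.\ at most $\delta M'$ failures total, so Maker keeps at least $(1-\delta)M'$ of her $M'$ picks and Corollary~\ref{cor_resilience} finishes (no further union bound over $K_k$-copies, as in your ``Part two'', is needed --- the resilience corollary already packages that work). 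The point you are missing is that Maker's effective board has size $M\gg M'$, so Breaker's $bM'$ moves remain a tiny fraction of it and Maker is nowhere near exhausting the board after $M'$ rounds; she therefore retains essentially all of her picks rather than a $\frac{1}{b+1}$-fraction of them.
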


\begin{proof}
Choose $\delta=\delta(G)$ according to Corollary \ref{cor_resilience} and let $c_2=\delta/10$.
Maker's strategy is as follows: in each of her moves she chooses an edge from $G$ uniformly at random
among all edges from $G$ that have not been claimed so far by herself. If she chooses an edge that is not claimed by Breaker so far,
she claims this edge. Otherwise, Maker declares her move as a failure and skips it.
Similar to \cite{SS}, we consider the first $M':=2\lfloor n^{2-\frac{2}{k+1}} \rfloor\leq \frac{\delta}{2}\cdot \frac{1}{b+1}M$
rounds of the game.
As only a $\frac{\delta}{2}$-fraction of all edges are claimed in these rounds, the probability for a failure is at most $\frac{\delta}{2}$
in each round. So, the number of failures can be ``upper bounded'' by a binomial random variable $X\sim Bin(M',\frac{\delta}{2})$,
which by Chernoff's inequality (Lemma~\ref{chernoff}) satisfies $Pr(X\geq 2\Exp(X))\leq \exp(-\frac{\Exp(X)}{3})=o(1)$.
That is, the number of failures will be at most $\delta M'$ a.a.s.\
Thus, Maker a.a.s.\ creates a graph $H\setminus R$ with $H\sim {\cal G}(T_{n,k},M')$ and $e(R)\leq \delta M'$, against
any strategy of Breaker,
which by Corollary \ref{cor_resilience} a.a.s.\ contains a good copy of $K_k$. Thus, a.a.s.\ Breaker cannot have a strategy to prevent good copies of $K_k$, and as either Maker or Breaker needs to have a winning strategy, the claim follows.
\end{proof}

\begin{corollary}\label{maker2}
Let $k\geq 3$ and $n$ be positive integers
Then there is a constant $c_3=c_3(k)>0$ such that for every $p\geq c_3n^{-\frac{2}{k+1}}$
and $G\sim {\cal G}(T_{n,k},p)$ the following a.a.s.\ holds:
Maker has a strategy to occupy a good copy of $K_k$ in the unbiased Maker-Breaker game on $G$.
\end{corollary}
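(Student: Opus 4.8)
The plan is to deduce the statement from Claim~\ref{maker1} by transferring from the model ${\cal G}(T_{n,k},M)$ to the model ${\cal G}(T_{n,k},p)$, and then using monotonicity to pass from one convenient value of $p$ to all larger ones. Write $N=e(T_{n,k})$, note that $N=\Theta(n^2)$ for fixed $k\geq 3$, and let ${\cal P}$ be the family of all subgraphs $G\subseteq T_{n,k}$ (equivalently, subsets of $E(T_{n,k})$) for which Maker has a winning strategy in the unbiased Maker--Breaker game on $G$ with goal a good copy of $K_k$. The first point I would record is that ${\cal P}$ is a monotone increasing property: adding edges to the board of a Maker--Breaker game never hurts Maker, since she may follow her old strategy and treat any Breaker move on a new edge as a ``wasted'' Breaker move, responding (if a response is needed) with an arbitrary free edge. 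Hence $G\in{\cal P}$ and $G\subseteq G'\subseteq T_{n,k}$ imply $G'\in{\cal P}$; moreover, Maker's strategy on the smaller board completes a good copy of $K_k$ before all of $E(G)$, hence all of $E(G')$, is claimed.

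Next I would apply Claim~\ref{maker1} with $b=1$ and $M:=\lceil c_2^{-1}n^{2-\frac{2}{k+1}}\rceil$, where $c_2=c_2(k)$ is the constant from that claim. Then $M\geq c_2^{-1}n^{2-\frac{2}{k+1}}$ and $c_2Mn^{-2+\frac{2}{k+1}}\geq 1$, so both hypotheses of Claim~\ref{maker1} hold with $b=1$, and we get $\Prob(G\in{\cal P})\to 1$ for $G\sim{\cal G}(T_{n,k},M)$. Applying Lemma~\ref{random_models} to ${\cal P}$ with $\delta=1$ (valid since $(1+\delta)\frac MN=2\frac MN\to 0\leq 1$ for all large $n$, because $N=\Theta(n^2)\gg M$), it follows that $\Prob(G\in{\cal P})\to 1$ also for $G\sim{\cal G}(T_{n,k},p_0)$, where $p_0:=2M/N=\Theta(n^{-\frac{2}{k+1}})$.

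Finally I would fix a constant $c_3=c_3(k)$ with $c_3 n^{-\frac{2}{k+1}}\geq p_0$ for all large $n$, which exists since $p_0=\Theta(n^{-\frac{2}{k+1}})$. Given any $p\geq c_3 n^{-\frac{2}{k+1}}$ we then have $p\geq p_0$, so ${\cal G}(T_{n,k},p)$ can be coupled to contain a copy of ${\cal G}(T_{n,k},p_0)$; by monotonicity of ${\cal P}$ this yields $\Prob(G\in{\cal P})\to 1$ for $G\sim{\cal G}(T_{n,k},p)$, which is exactly the assertion. The argument is essentially bookkeeping around Claim~\ref{maker1}; the only points needing care are the monotonicity of ${\cal P}$ (used both to feed Lemma~\ref{random_models}, whose conclusion concerns a single value of $p$, and to reach all $p\geq c_3 n^{-\frac{2}{k+1}}$) and the check that $(1+\delta)M/N\leq 1$ so that Lemma~\ref{random_models} is applicable. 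I do not expect any genuinely new obstacle beyond what is already handled in Claim~\ref{maker1}.
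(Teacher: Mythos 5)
Your proof is correct and follows the same route as the paper's one-line argument: apply Claim~\ref{maker1} (with $b=1$ and $M$ of order $c_2^{-1}n^{2-\frac{2}{k+1}}$) and transfer from the uniform to the binomial model via Lemma~\ref{random_models}, taking ${\cal P}$ to be the family of boards on which Maker wins. You merely spell out details the paper leaves implicit — in particular the monotonicity of ${\cal P}$, which is genuinely needed both for Lemma~\ref{random_models} (the cited JLR Corollary 1.16(iii) requires an increasing property, a hypothesis the paper's restatement omits) and for the final coupling step to reach every $p\geq c_3 n^{-\frac{2}{k+1}}$.
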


\begin{proof}
The statement follows immediately from Corollary \ref{maker1} and Lemma \ref{random_models},
where we choose ${\cal P}$ to be the family of all graphs $G\subseteq T_{n,k}$
for which Maker has a strategy to occupy a good copy of $K_k$ in the unbiased Maker-Breaker game
on $E(G)$.
\end{proof}

Finally, we can prove the two propositions.

{\bf Proof of Proposition \ref{biased_tournament}.}
Let $T$ be the tournament, with $k\geq 3$ vertices, of which Maker aims to create a copy on $K_n$.
By Theorem 1 in \cite{BL}, we know that there is a constant $c>0$ such that for large enough $n$ and for every $b\geq cn^{\frac{2}{k+1}}$,
Breaker has a strategy to prevent cliques of order $k$. Using this strategy, Breaker
wins the $T$-tournament game on $K_n$.
Now, let $c_2=c_2(k)$ be given according to Claim \ref{maker1}, and let $M=e(T_{n,k})$, $b=0.25c_2n^{\frac{2}{k+1}}$. Then Claim \ref{maker1} implies that Maker has a strategy to occupy a good copy of $K_k$
in the $(1:b)$ Maker-Breaker game on $T_{n,k}$. But, as we argued earlier, this also gives Maker a strategy 
for the $(1:b)$ $T$-tournament game on $K_n$.
\hfill $\Box$

{\bf Proof of Proposition \ref{random_tournament}.}
Let $T$ be the tournament, with $k\geq 4$ vertices, of which Maker aims to create a copy in an unbiased game on $G\sim \gnp$.
By Theorem 1.1 in \cite{MS}, we know that there is a constant $c>0$ such that for $p\leq cn^{-\frac{2}{k+1}}$,
Breaker a.a.s.\ has a strategy to block cliques of order $k$ in the unbiased Maker-Breaker game on $G$,
which again gives a winning strategy for Breaker in the $T$-tournament game on $G$.
Now, let $p\geq c_3n^{-\frac{2}{k+1}}$, with $c_3=c_3(k)$ from Corollary \ref{maker2}.
Before sampling the random graph $G\sim {\cal G}_{n,p}$ fix a partition $V_1\cup\ldots \cup V_k=[n]$ as before. Then, after sampling 
$G\sim {\cal G}_{n,p}$, we know that the subgraph induced by those edges which intersect two different parts $V_i$ and $V_j$ is sampled like a random graph $F\sim {\cal G}(T_{n,k},p)$. According to Corollary \ref{maker2},
Maker a.a.s.\ has a strategy to occupy a good copy of $K_k$ in $F\subseteq G$,
and thus Maker a.a.s.\ has a strategy to create a copy $T$ in the unbiased tournament game on $G$.
\hfill $\Box$

\section{The triangle case}\label{sec:triangle}

In the following we prove {\bf Theorem \ref{T_3}}.

For the acyclic triangle $T_A$, the result can be obtained from \cite{SS} as follows: For $p\ll n^{-\frac{5}{9}}$
Breaker a.a.s.\ has a strategy to prevent triangles in the unbiased Maker-Breaker game on $G\sim \gnp$.
Applying such a strategy in the $T_A$-tournament game as Breaker obviously blocks acyclic triangles. 
For $p\gg n^{-\frac{5}{9}}$ a.a.s.\ Maker has a strategy to gain an undirected triangle in the unbiased Maker-Breaker game on $G\sim\gnp$. In the $T_A$-game, Maker now can proceed as follows. She fixes an arbitrary ordering $\{v_1,\ldots, v_n\}$ of $V(G)$ before the game starts. Then she applies the mentioned strategy of Maker for gaining an undirected triangle, where she always chooses orientations from vertices of smaller index to vertices of larger index. This way, every triangle claimed by her will be an acyclic triangle, and thus she wins.

Thus, from now on, we can restrict the problem to the discussion of the cyclic triangle $T_C$. To show that $n^{-\frac{8}{15}}$ is the threshold probability for the existence of a winning strategy for Maker in the $T_C$-tournament  game on $G\sim\gnp$, we will study Maker's and Breaker's strategy separately.

\medskip

\begin{figure} [htbp]
\centering
\includegraphics[scale=0.8]{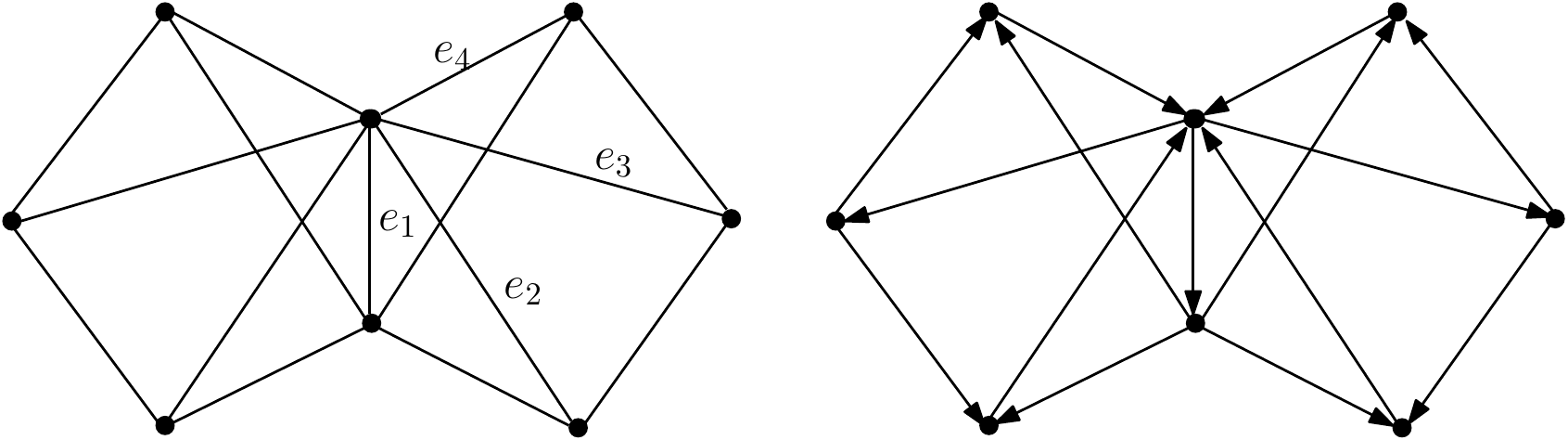}
\caption{Graph $H$ without and with orientation.}
\label{graphH}
 \end{figure}

We start with {\bf Maker's strategy}. Let $p \gg n^{-\frac{8}{15}}$. Then, by Theorem \ref{threshold_subgraph}, a.a.s.\ $G\sim\gnp$
contains the graph $H$, presented in the left half of Figure \ref{graphH}, as $m(H)=\frac{15}{8}$. As indicated in the right half of the same figure, its edges can be oriented in such a way that each triangle has a cyclic orientation, and thus,
it is enough to prove that Maker has a strategy 
to claim an undirected triangle in the unbiased Maker-Breaker game on $H$. 
Her strategy is as follows. At first she claims the edge $e_1$, as indicated in the figure. By symmetry, we can assume 
that afterwards Breaker claims an edge which is on the ``left side" of $e_1$. Then in the next moves, as long
as she cannot close a triangle, 
Maker claims the edges $e_2$, $e_3$ and $e_4$, always forcing Breaker to block an edge
which could close a triangle, and Maker will surely be able to complete a triangle in the next round.

Now, let $p\ll n^{-\frac{8}{15}}$. We are going to show that a.a.s.\ there exists a {\bf Breaker's strategy} which blocks
copies of $T_C$, when playing on $G\sim \gnp$. We start with some preparations. Amongst others, we will consider {\em triangle collections},
as studied in \cite{SS}.

\begin{definition}\label{def:collections}
Let $G=(V,E)$ be some graph without isolated vertices. Further, let \linebreak
$T_G=(V_T,E_T)$ be the graph where
$V_T=\{H\subseteq G:\ H\cong K_3\}$ is the set of all triangles in $G$,
and $E_T=\{H_1H_2:\ E(H_1)\cap E(H_2)\neq \emptyset\}$
is the (binary) relation on $V_T$ of having a common edge. Then:
\begin{itemize}
\item $G$ is called {\em very basic} if $T_G$ is a subgraph of a copy of $K_3^+$ (triangle plus a pending edge), 
or a subgraph of a copy of $P_k$ with $k\in\mathbb{N}$.
\item $G$ is called {\em basic} if there are distinct edges $e_1,e_2\in E(G)$
such that $T_{G-e_i}$ is very basic for both $i\in \{1,2\}$. 
\item $G$ is a triangle collection if every edge of $G$ is contained in some triangle
and $T_G$ is connected.
\end{itemize}
If $G$ is a triangle collection we further call it a bunch (of triangles) if 
we can find triangles $F_1,\ldots F_r\in V_T$ covering all edges of $G$
with the property that $|V(F_i)\setminus \cup_{j<i} V(F_j)|=1$
and $|E(F_i)\setminus \cup_{j<i} E(F_j)|\geq 2$ for every $i\in[r]$.
\end{definition}

\begin{figure} [htbp]
\centering
\includegraphics[scale=0.75]{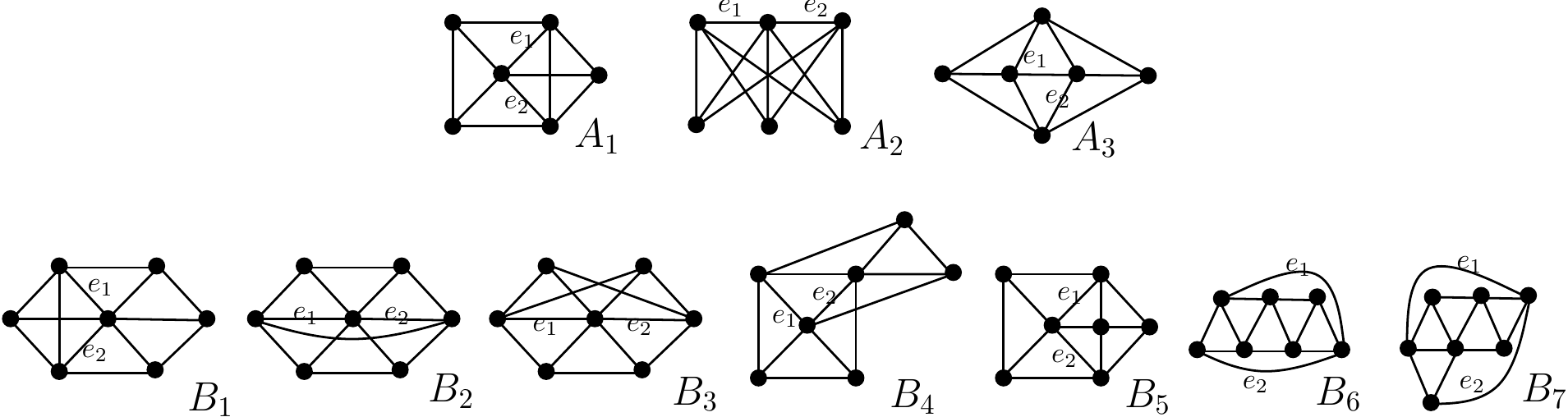}
\caption{Basic triangle collections.}
\label{basic}
 \end{figure}

Note that every collection on a given number $n$
of vertices, contains a bunch on the same number of vertices
with at least $2n-3$ edges. 
Figure \ref{basic} shows some collections that are easily checked to be basic. 
For each of the graphs, the edges $e_1$ and $e_2$ indicated in the figure satisfy the condition from the definition
of basic graphs. Moreover,
the following observation is easily verified.

\begin{observation}\label{reduction}
Let $G=(V,E)$. Maker has a strategy to create a triangle (a copy of $T_C$) on $G$
if and only if $G$ contains a collection $C$ such that she has a strategy to create a triangle (a copy of $T_C$)  on $C$.
\end{observation}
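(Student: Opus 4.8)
\textbf{Proof plan for Observation~\ref{reduction}.}
The plan is to reduce the statement to two folklore facts about Maker--Breaker games on a board $X$ with winning-set family $\cF$. \emph{Fact (i):} deleting from $X$ any element that belongs to none of the sets of $\cF$ does not change the winner; indeed Maker may treat claiming such an element as a pass and Breaker's claim of it as a pass, and passing never harms Maker. \emph{Fact (ii):} if $X=X_1\cup X_2$ with $X_1\cap X_2=\emptyset$ and every set of $\cF$ lies entirely in $X_1$ or entirely in $X_2$, then Maker wins the game on $(X,\cF)$ if and only if she wins at least one of the sub-games $(X_i,\{F\in\cF:\ F\subseteq X_i\})$; if she wins some $X_i$ she plays that strategy and ignores the rest of the board, while if she wins neither, Breaker answers each Maker move in $X_i$ by the move prescribed by a Breaker-win strategy for sub-game $i$. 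With these in hand, the backward direction of the Observation is immediate: if $C\subseteq G$ is a collection on which Maker has a winning strategy, she applies that strategy on $E(G)$ and ignores all edges outside $C$, and since every triangle (resp.\ cyclic triangle) of $C$ is also one of $G$, she wins on $G$.

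For the forward direction, assume Maker wins on $G$. An edge lying in no triangle of $G$ lies in no winning set, so by Fact (i) Maker also wins the game on the subgraph $G'$ obtained from $G$ by deleting all such edges; now every edge of $G'$ is contained in a triangle. Consider the components of the triangle-intersection graph $T_{G'}$ from Definition~\ref{def:collections}. If an edge $e$ lies in triangles $t$ and $t'$, then $t$ and $t'$ are adjacent in $T_{G'}$, so all triangles through a fixed edge belong to a single component; hence $E(G')$ is a disjoint union $E(C_1)\cup\cdots\cup E(C_m)$, where $C_i$ is the subgraph of $G'$ spanned by the edges of the $i$-th component, and every triangle of $G'$ lies inside exactly one $C_i$. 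Each $C_i$ has no isolated vertices (each of its vertices lies in a triangle), all of its edges lie in triangles, and $T_{C_i}$ is connected, so $C_i$ is a collection. Applying Fact (ii) $m-1$ times, Maker wins on $G'$ if and only if she wins on some $C_i$; this $C_i$ is the desired collection contained in $G$.

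The only points needing care are making Facts (i) and (ii) precise --- the ``passing does not harm Maker'' argument for (i), and the book-keeping by which Breaker runs the parallel Breaker strategies in (ii) --- but neither presents a genuine difficulty, so I expect the argument to be routine. Finally, the same reasoning applies verbatim to the cyclic-triangle game: orienting edges only restricts which triangles are counted as won for Maker, not which edges occur in some winning configuration, and this restriction is identical in $G$, in $G'$, and in each $C_i$; moreover Maker's play inside one component $C_i$ is unaffected by the orientations she chooses in the other components, so the decomposition in Fact (ii) goes through unchanged.
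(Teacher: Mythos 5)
Your proof is correct. The paper offers no proof of Observation~\ref{reduction} at all --- it is merely declared ``easily verified'' --- so there is no authors' argument to compare against; your argument, decomposing $E(G')$ along the connected components of the triangle-intersection graph $T_{G'}$ after discarding edges that lie in no triangle, and then invoking the two standard monotonicity facts, is the natural justification the authors presumably have in mind, and it handles the oriented ($T_C$) case correctly as well.

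One small phrasing caveat worth tightening if you write this out fully: in your Fact~(i) the slogan ``passing never harms Maker'' is not literally true (a genuine skipped move costs Maker an element); what you actually need is the standard pair of facts that \emph{Breaker} skipping a move never hurts Maker and that a forced \emph{extra} move by Maker never hurts her, which together give the ``useless elements can be deleted'' principle in both directions (and likewise the ``parallel Breaker strategies'' bookkeeping in Fact~(ii) uses that an extra move never hurts Breaker). These are indeed folklore, and you correctly identify them as the only points needing care.
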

%

In the following we show now that Breaker can prevent Maker from occupying a triangle when playing
on basic graphs. This also ensures a winning strategy for Breaker in the corresponding
$T_C$-tournament game. We start with the following proposition.

\begin{proposition}\label{obs:basic}
Let $G=(V,E)$ be very basic, then Breaker can block every triangle
in the unbiased Maker-Breaker game on $E(G)$, even if Maker is allowed to claim two edges in the very first round.
\end{proposition}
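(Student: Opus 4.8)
The plan is to exploit how restrictive ``very basic'' is: the triangles of $G$ -- the vertices of $T_G$ -- are arranged either along disjoint paths (a subgraph of $P_k$ is a linear forest) or, in the $K_3^+$-case, in a near-star with at most four triangles, one adjacent in $T_G$ to all the others. In either case Breaker wins by a pairing strategy, patched to absorb the single extra edge Maker gets in the first round. Call a triangle of $G$ \emph{live} at a given moment if Breaker holds none of its edges; Breaker wins exactly when no live triangle survives to the end.

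I would first settle the structure. In the $K_3^+$-case, a short check of which edges two distinct triangles can have in common shows that three triangles pairwise sharing an edge must in fact share one common edge $e$ -- otherwise $G$ contains $K_4$, whose triangle graph is $K_4\not\subseteq K_3^+$. Hence either all triangles of $T_G$-degree $\ge 2$ pass through a single edge $e$, or $T_G$ is (a subgraph of) the star $K_{1,3}$, or $T_G$ is a linear forest and we are in the path-case. In the path-case write each path-component as $H_1-\dots-H_m$ with $H_i\cap H_{i+1}=\{f_i\}$; since non-consecutive triangles of a component are edge-disjoint and $f_{i-1}\ne f_i$, each inner $H_i$ has a unique \emph{private} edge $g_i$ lying in no other triangle, and $H_1,H_m$ have two private edges each. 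Define the pairing $\pi$ by pairing $H_1$ with its two private edges and $H_i$ $(i\ge 2)$ with $\{f_{i-1},g_i\}$; these pairs are pairwise disjoint and $\pi(H_i)\subseteq H_i$, so against a Maker who claims one edge per move, following $\pi$ already blocks every triangle. In the $K_3^+$-case an analogous easy pairing works once Breaker has opened by claiming $e$ -- which kills every triangle of $T_G$-degree $\ge 2$ at once -- or, if $T_G=K_{1,3}$, an edge of the central triangle; at most two triangles then survive, and their remaining edges form two disjoint pairs.

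To absorb the double opening move, Breaker proceeds as follows. If Maker's two opening edges lie in a common triangle $H$, she threatens $H$ and Breaker is forced to take its third edge, killing $H$; otherwise no triangle contains both of those edges. Then Breaker follows $\pi$ with the priority rule: (1) if Maker has just put two edges into some live triangle, take its third (still free) edge; (2) otherwise answer Maker's move by the matching $\pi$-edge, or, if that pair is already settled, clear the one pair Breaker may owe (a pair $\pi(H)$ of which Maker holds one edge and Breaker neither); (3) otherwise play arbitrarily. The claim is that Breaker can maintain the invariant that, after each of his moves, every live triangle holds at most one Maker edge and he owes at most one pair. This invariant suffices: a two-out-of-three configuration in a live triangle appears only in the middle of a Maker move and is destroyed by Breaker's immediate reply, so Maker never completes a triangle; combined with the reductions above -- pure pairing mopping up the survivors -- this gives the proposition.

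The main obstacle is verifying that invariant. Two facts drive it. First, two distinct triangles share at most one edge, so a single Maker move fills a second edge into at most one live triangle -- in particular the two-edge opening over-fills at most one triangle, exactly the one that Breaker's forced reply kills. Second, a live triangle can hold only its unique edge outside $\pi$: the moment Maker takes a $\pi$-edge of a triangle $H$, Breaker's pairing answer puts a Breaker edge into $H$ and $H$ stops being live. The one exception is the opening move, which is precisely where the single owed pair comes from, and one checks that Breaker always clears that debt before it matures, since he is pulled off clearing it only by rule~(1), i.e.\ only while Maker herself is spending a move to over-fill a triangle that Breaker then kills. The structural reductions and the verification that $\pi$ is a pairing are routine.
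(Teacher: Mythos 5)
Your argument has a genuine gap, rooted in the two ``driving facts'' of your invariant. The claim that ``a single Maker move fills a second edge into at most one live triangle'' does not follow from triangles sharing at most one edge, and it is in fact false in precisely the situation that your owed-pair exception permits. Concretely, take $T_G\cong P_5$ with triangles $H_1,\ldots,H_5$, shared edges $f_i=H_i\cap H_{i+1}$ and private edges $g_i$, so $\pi(H_i)=\{f_{i-1},g_i\}$ for $i\ge 2$. Let Maker open with $g_3$ and $f_4$. These are in no common triangle, so rule~(2) applies; your strategy does not say which opening edge to answer. If Breaker answers $f_4\in\pi(H_5)$ by taking $g_5$, then after his move $\pi(H_3)$ is owed (Maker holds $g_3$), while $H_4$ is live holding the out-of-$\pi$ edge $f_4$. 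Now Maker plays $f_3=H_3\cap H_4$: both $H_3$ and $H_4$ become live triangles with two Maker edges and no Breaker edge, Breaker can only kill one of them, and Maker finishes the other next move. Breaker does survive this position if he instead answers $g_3$ first, but that choice is neither prescribed nor justified in your write-up, and the example shows that the choice is essential. The same issue threatens the $K_3^+$-case, where the claim that Breaker can open with $e$ (or with an edge of the central triangle) is not available if Maker's two opening edges already interfere there.

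The paper avoids all of this by building the pairing adaptively after seeing Maker's opening. It chooses a shelling order $F_1,\ldots,F_k$ of the triangles with $f_1\in E(F_1)$ and $|E(F_i)\setminus\bigcup_{j<i}E(F_j)|=2$ for $i\ge 2$ (possible exactly because $T_G$ is a subgraph of a path or of $K_3^+$), then sets $A_1=E(F_1)\setminus\{f_1\}$ and $A_i=E(F_i)\setminus\bigcup_{j<i}E(F_j)$. These are pairwise disjoint two-element sets with $A_i\subseteq E(F_i)$; now $f_1$ lies outside every pair, $f_2$ lies in exactly one pair, and Breaker's forced first move is the partner of $f_2$, after which the plain pairing strategy on $\{A_i\}$ blocks every triangle with no debts to track. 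If you want to repair your fixed-pairing approach you would have to prove that Breaker can always identify the correct opening edge to answer, which is essentially what rooting the pairing at $f_1$ accomplishes automatically.
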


\begin{proof}
Without loss of generality (abbreviated \textit{W.l.o.g.} in the rest of the paper) we can assume that $T_G\cong P_k$ for some $k$, or $T_G\cong K_3^+$,
with $T_G$ as given in Definition \ref{def:collections}.
We further can assume that Maker in the first round claims
two edges $f_1,f_2\in E(G)$ that participate in triangles of $G$.
If $T_G\cong P_k$ then observe that there is an ordering $F_1,\ldots, F_k$ of the elements in $T_G$,
such that $f_1\in E(F_1)$, and
$|V(F_i)\setminus \cup_{j<i} V(F_j)|=1$,
and $|E(F_i)\setminus \cup_{j<i} E(F_j)|=2$ for every $2\leq i\leq k$.
To see this one just has to start the sequence with a triangle $F_1$
containing $f_1$, and to extend the sequence along the path-like structure of $T_G$.
Finally, let $A_1:=E(F_1)\setminus \{f_1\}$
and $A_i:=E(F_i)\setminus \cup_{j<i} E(F_j)$ for every $i\in [k]\setminus\{1\}$.
These sets are pairwise disjoint, have cardinality $2$ and satisfy $A_i\subseteq E(F_i)$ 
for each $i\in [k]$. That is, Breaker can block triangles by an easy pairing strategy.
(In particular, for his first move, Breaker claims the unique edge $f$ 
for which there is an $i\in [k]$ with $A_i=\{f_2,f\}$.) 
If $T_G\cong K_3^+$, then it can be shown that $G$ contains exactly four triangles
and that one can find an ordering $F_1,\ldots F_k$ (with $k=4$) 
with the properties from the previous case. So, Breaker wins similarly.
\end{proof}

\begin{corollary}\label{cor:basic}
Let $G=(V,E)$ be basic, then Breaker can block every triangle
in the unbiased Maker-Breaker game on $E(G)$.
\end{corollary}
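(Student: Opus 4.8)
The plan is to leverage Proposition~\ref{obs:basic}, which already handles the \emph{very basic} case, and to show that a win for Breaker on a basic graph reduces to two applications of that result. Recall that $G$ being basic means there are two distinct edges $e_1, e_2 \in E(G)$ such that $T_{G - e_i}$ is very basic for both $i \in \{1,2\}$. The key observation is that every triangle of $G$ either avoids $e_1$ (so it lives in $G - e_1$) or avoids $e_2$ (so it lives in $G - e_2$); indeed, no triangle can contain both $e_1$ and $e_2$ unless they share a vertex and form part of a common triangle, and even then a triangle has only three edges, so if $e_1 \ne e_2$ a triangle containing both is still just one triangle — but the point is that the set of \emph{all} triangles of $G$ is the union of the triangles of $G - e_1$ and the triangles of $G - e_2$. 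So if Breaker can simultaneously block all triangles in $G - e_1$ using his strategy there, and all triangles in $G - e_2$ using his strategy there, he blocks every triangle of $G$.

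First I would make precise the reduction: Breaker plays the game on $E(G)$ but mentally runs two auxiliary strategies in parallel, one being the Proposition~\ref{obs:basic} strategy $\sigma_1$ for the very basic graph $G - e_1$, and the other the strategy $\sigma_2$ for $G - e_2$. The subtlety is move accounting — $\sigma_1$ expects to respond on the board $E(G) \setminus \{e_1\}$ and $\sigma_2$ on $E(G)\setminus\{e_2\}$, and Breaker has only one move per round to serve both. Here is where the clause ``even if Maker is allowed to claim two edges in the very first round'' in Proposition~\ref{obs:basic} becomes essential. Breaker's opening move is to claim $e_1$ (or $e_2$); say he claims $e_1$. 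Now $e_1$ is off the board, and the remaining board is exactly $E(G - e_1)$, on which $\sigma_1$ is a valid pairing-type strategy — but we must feed $\sigma_1$ the information of Maker's first move. If Maker's first move was a single edge $f$, then after Breaker claims $e_1$, Maker will make a second move $f'$ before Breaker moves again? No — turn order is Maker, Breaker, Maker, $\dots$, so Breaker claiming $e_1$ is his first response to Maker's opening edge $f$. From round two onward Breaker simply follows $\sigma_1$ on the board $E(G - e_1)$, treating Maker's opening edge $f$ as her first move there. That blocks all triangles through no edge equal to $e_1$.

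The remaining gap is the triangles that \emph{do} use $e_1$: these are triangles in $G - e_2$ (since $e_1 \ne e_2$, such a triangle avoids $e_2$), but once Breaker has committed $e_1$ to his opening move he is no longer free to follow $\sigma_2$. The resolution is that a triangle using $e_1$ cannot be completed by Maker at all, because Breaker owns $e_1$ — Maker needs all three edges of a triangle, and $e_1$ is Breaker's. So in fact, after Breaker's opening move claims $e_1$, every triangle of $G$ that Maker could still hope to build avoids $e_1$, i.e. lies in $G - e_1$, and $\sigma_1$ handles all of those. The role of $e_2$ in the definition of ``basic'' is then only to guarantee such an $e_1$ \emph{exists} in a symmetric, checkable way for the graphs in Figure~\ref{basic} — but for the proof we only need \emph{one} of the two edges. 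So the argument is: Breaker claims $e_1$ on move one; thereafter he plays strategy $\sigma_1$ from Proposition~\ref{obs:basic} on the very basic graph $G - e_1$, feeding it Maker's opening edge as her first move there; since $\sigma_1$ blocks every triangle of $G - e_1$ and Breaker's ownership of $e_1$ kills every triangle through $e_1$, Breaker blocks every triangle of $G$. Finally, blocking every undirected triangle trivially blocks every copy of $T_C$, so Breaker also wins the $T_C$-tournament game on $E(G)$. I do not expect a serious obstacle here; the only thing to be careful about is the bookkeeping of who moves when in the first round, and confirming that $\sigma_1$ is indeed robust to Maker having an arbitrary single opening edge already placed — which is exactly (in fact weaker than) the two-edges-in-the-first-round hypothesis of Proposition~\ref{obs:basic}.
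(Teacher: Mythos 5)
Your overall plan---Breaker claims one of $e_1,e_2$ on his first move and thereafter follows the Proposition~\ref{obs:basic} strategy on the remaining very basic subgraph---is exactly the paper's argument. But you misexplain both of the subtleties that make it work, and the misexplanations are not cosmetic.

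First, the reason the definition of \emph{basic} requires \emph{two} distinguished edges $e_1,e_2$ is not, as you write, ``only to guarantee such an $e_1$ exists in a symmetric, checkable way.'' It is that Maker moves first, and her opening edge might be $e_1$ itself; then Breaker cannot take $e_1$ and must fall back on $e_2$. Having two candidates guarantees at least one of them is still free after Maker's opening move, whichever edge she picks. That is the whole reason the definition asks for two edges, and you should say so rather than dismiss $e_2$ as redundant. Second, your move accounting is wrong in the paragraph where you answer ``No.'' After Maker plays $f$ and Breaker replies with $e_1$, it is Maker's turn again and she plays $f'$; only then does Breaker make his first move inside $E(G-e_1)$. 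At that point Maker already owns two edges of $G-e_1$ (namely $f$ and $f'$) and Breaker owns none. So the clause in Proposition~\ref{obs:basic} that Breaker succeeds even when ``Maker is allowed to claim two edges in the very first round'' is invoked at exactly full strength---not, as you claim, something ``weaker than'' what you need. The strategy you describe does work, but precisely because that two-edge allowance is tailored to the one-move deficit Breaker incurs by spending his opening move on $e_i$; your framing of $\sigma_1$ ``treating Maker's opening edge $f$ as her first move'' understates what $\sigma_1$ must absorb.
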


\begin{proof}
Let $e_1,e_2$ be the edges given by the definition of a basic graph.
Breaker's strategy is to claim $e_1$ or $e_2$ in the first round.
Afterwards, the game reduces to the graph $G-e_i$ for some $i\in [2]$,
where Maker claims 2 edges, before Breaker claims his first edge.
Now, since $G-e_i$ is very basic for both $i\in \{1,2\}$, Breaker then succeeds by the previous proposition.
\end{proof}

We further observe the following two statements which can be checked
by easy case distinctions.

\begin{observation}\label{K_4}
Breaker has a strategy to prevent cyclic triangles in an unbiased game on $E(K_4)$,
even if Maker is allowed to claim and orient two edges in her first turn. 
\end{observation}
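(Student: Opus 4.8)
The proof is a short but careful finite case analysis, driven by Maker's first (double) move. First record the bookkeeping: $K_4$ has six edges and four triangles, each edge lies in exactly two triangles, and over the whole game Maker claims four edges and Breaker claims two. The guiding fact is that the three perfect matchings $\{12,34\}$, $\{13,24\}$, $\{14,23\}$ partition $E(K_4)$ and every triangle contains exactly one edge from each of them; hence if Breaker ever owns both edges of a perfect matching, he has blocked every (undirected, and in particular every cyclic) triangle and has won. So the whole difficulty is confined to lines in which Breaker is prevented from completing a matching.

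\textbf{Case 1: Maker's first two edges share a vertex}, say they are $12$ and $13$ at vertex $1$. Up to the automorphism swapping $2$ and $3$ and the global orientation-reversal (which preserves cyclicity), there are two sub-cases. If both edges point the same way at vertex $1$ (say $1\to2$ and $1\to3$), then the triangle $123$ can never be made cyclic no matter how the third edge is oriented, the triangle $234$ can never be completed by Maker (she has only two further moves), and both $124$ and $134$ require the edge $14$; so Breaker simply takes $14$ and every cyclic triangle of Maker is dead. If instead Maker's two edges form a directed path through vertex $1$, then $123$ becomes a one-move cyclic threat, forcing Breaker to claim the third edge $23$ immediately; afterwards Maker's only remaining cyclic threats again all pass through the single edge $14$, and once Maker commits its orientation she can realise at most one of the two surviving triangles as a cyclic triangle, which Breaker blocks (and if Maker postpones $14$, Breaker takes $14$ himself).

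\textbf{Case 2: Maker's first move is a perfect matching}, which up to symmetry we may take to be the oriented edges $1\to2$ and $3\to4$. Now the four free edges form a $4$-cycle and Breaker cannot finish a matching, so orientations must do the work. The key move is that Breaker picks his first edge $e$ to be one of the two ``tail--tail/head--head'' cross edges, here $13$ or $24$; with this choice the two triangles \emph{not} through $e$ (which always share exactly one further edge $f$) require $f$ to be oriented in two incompatible directions in order to be cyclic. Consequently Maker can never create a genuine double cyclic threat: the instant she claims $f$ she destroys the cyclic-completability of one of the two triangles and Breaker blocks the other, while if she postpones $f$ she has at most one cyclic threat, which Breaker kills by taking $f$ himself. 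In every resulting line Maker's final graph contains at most an undirected triangle whose committed orientation is already non-cyclic. \hfill$\Box$

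\textbf{Where the work is.} Conceptually this is routine because the board is tiny; the only genuinely fiddly part is Case~2, namely (i) isolating which of Breaker's four possible opening edges are the ``good'' ones, i.e.\ exactly those producing the forced-orientation conflict on the shared edge $f$, and (ii) checking, across all of Maker's replies and all orientations she might commit to on $f$, that Breaker is never out-tempoed by a simultaneous threat. Being systematic about the coupling between which edges Maker claims and the directions she has locked in is what makes the argument go through; none of it involves estimates.
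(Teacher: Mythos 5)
The paper itself gives no proof of this observation; it merely remarks that the statement can be checked by ``easy case distinctions,'' so there is nothing to compare against. Your write-up fills in that gap and the strategy you describe is correct, though several sub-case verifications are left implicit.

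The one genuine idea in your argument, and the only place it could go wrong, is Case~2. After Maker plays a matching, say $1\to 2$ and $3\to 4$, the remaining four edges form the $4$-cycle $1,3,2,4$, and Breaker must open on the ``tail--tail'' or ``head--head'' cross edge ($13$ or $24$). You are right that this choice matters: with either of the other two cross edges the two surviving triangles demand the \emph{same} orientation on their shared edge, and Maker then creates an unstoppable double cyclic threat. For instance, if Breaker takes $14$, then $123$ and $234$ survive, both needing $(2,3)$; Maker plays $2\to 3$ and threatens cyclic $123$ via $(3,1)$ and cyclic $234$ via $(4,2)$ simultaneously, and Breaker can block only one. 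With Breaker on $13$, by contrast, the surviving triangles $124$ and $234$ require $24$ to be oriented $(2,4)$ and $(4,2)$ respectively, so whichever way Maker commits the shared edge she kills one threat and Breaker kills the other; if she delays, Breaker takes $24$ and both die. Your Case~1 is also sound: if both edges leave the common vertex, that triangle is already uncompletable as a cyclic triangle and Breaker's move $14$ finishes the job; if they form a directed path, Breaker is forced onto $23$, after which the two surviving triangles both pass through $14$ with conflicting orientation demands, and the same mechanism applies. One stylistic remark: the perfect-matching observation in your opening paragraph is attractive motivation but is never actually the mechanism Breaker uses in the lines you analyze; throughout, it is the orientation conflicts that do the work.
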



\begin{observation}\label{4-wheel}
Breaker has a strategy to prevent cyclic triangles in an unbiased game on $E(W_4)$,
even if Maker is allowed to claim and orient two edges in her first turn, as long as
not both edges are incident with the center vertex of $W_4$. 
\end{observation}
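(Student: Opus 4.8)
The approach is a direct case analysis, exploiting the small size of $W_4$ (it has $9$ edges: the $4$-cycle $C_4=v_1v_2v_3v_4$ plus the four spokes $zv_1,\dots,zv_4$). First I would enumerate the triangles of $W_4$: each is of the form $zv_iv_{i+1}$ for $i\in\{1,2,3,4\}$ (indices mod $4$), so there are exactly four triangles, and they are ``chained'' around the wheel, with consecutive triangles sharing a spoke and each $C_4$-edge lying in exactly one triangle. Breaker's task is to ensure that in no triangle does Maker ever own all three edges with a consistent cyclic orientation; since blocking \emph{any} one edge of a triangle suffices to kill that triangle, it is enough to show Breaker can occupy an edge in each of the four triangles — except that a single spoke lies in two triangles, so Breaker effectively needs only three well-chosen edges (two opposite spokes, say $zv_1$ and $zv_3$, would already meet all four triangles).

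The plan is then: after Maker's double first move (two edges, not both spokes), Breaker responds, and thereafter it is an alternating $(1:1)$ game. I would split into cases according to how many of Maker's two initial edges are spokes. If Maker takes zero spokes, both her edges are on $C_4$, and Breaker immediately claims a spoke that hits both remaining triangles not yet ``threatened'' and then mirrors; here one should note Maker owns at most two $C_4$-edges so at most two triangles can still be completed, and a single further Breaker move handles them. If Maker takes exactly one spoke, say $zv_1$, together with one $C_4$-edge, then $zv_1$ sits in triangles $zv_1v_2$ and $zv_4v_1$; Breaker claims the opposite spoke $zv_3$, which meets the other two triangles $zv_2v_3$ and $zv_3v_4$, leaving Maker needing to complete $zv_1v_2$ or $zv_4v_1$, each of which still has a free $C_4$-edge ($v_1v_2$ resp. $v_4v_1$) that Breaker grabs when threatened — a pairing on $\{v_1v_2\}$ and $\{v_4v_1\}$ finishes it. The excluded case (both initial edges spokes) is exactly the one where Maker could own, e.g., $zv_1$ and $zv_2$, reducing triangle $zv_1v_2$ to the single edge $v_1v_2$ while simultaneously threatening nothing Breaker can pair against cheaply; this is why the hypothesis is needed.

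I would organize the remaining bookkeeping by observing that after Breaker's reply the position always reduces to a \emph{basic} or \emph{very basic} subgraph in the sense of Definition \ref{def:collections} — more precisely, to a union of at most two edge-disjoint triangles on which Breaker has a pairing — and then invoke Proposition \ref{obs:basic} / Corollary \ref{cor:basic} rather than re-deriving the endgame by hand. The one genuine subtlety, and the step I expect to be the main obstacle, is the sub-case where Maker's two opening edges are a spoke $zv_1$ and the \emph{non-incident} $C_4$-edge $v_3v_4$ (or $v_2v_3$): here Maker has live threats in three different triangles after one move, so Breaker's reply must be chosen to collapse two of them at once — claiming $zv_3$ does this, killing $zv_2v_3$ and $zv_3v_4$, after which only $zv_1v_2$ and $zv_4v_1$ survive and Breaker pairs $\{v_1v_2\}\leftrightarrow\{v_2v_3?\}$... one must instead pair within the two surviving triangles using their private $C_4$-edges. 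Verifying that this pairing is valid (the two edges used are distinct and free) in every rotation of the configuration is the routine-but-careful part; everything else is immediate from the triangle structure of $W_4$ and the results already established.
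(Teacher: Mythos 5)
Your high-level approach (block \emph{every} undirected triangle, which a fortiori blocks cyclic ones, by case distinction on how many of Maker's two opening edges are spokes) is sound, and your handling of the zero-spoke case and the ``opposite-spoke'' response is the right idea in many situations. But there is a concrete gap in the one-spoke case that makes the prescribed strategy incorrect. You write that if Maker opens with a spoke $zv_1$ and a $C_4$-edge, Breaker should claim the opposite spoke $zv_3$, and you then treat as the ``subtle'' subcase the one where Maker's rim edge is \emph{non}-incident to $v_1$ (e.g.\ $v_2v_3$). In fact that subcase is fine: after $zv_3$, the two surviving triangles $zv_1v_2$ and $zv_4v_1$ admit the pairing $\{zv_2,v_1v_2\}$, $\{zv_4,v_4v_1\}$ (note these must be \emph{pairs}, not singletons as you wrote). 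The subcase you never address --- and the one where $zv_3$ actually loses --- is when Maker's rim edge is \emph{incident} to $v_1$, say she opens with $zv_1$ and $v_1v_2$. Then the triangle $zv_1v_2$ is already two-thirds claimed by Maker with a single free edge $zv_2$; if Breaker plays $zv_3$, Maker takes $zv_2$ and owns all three edges of $zv_1v_2$, which she has of course oriented cyclically. Here Breaker is forced to answer $zv_2$, after which the game continues (and Breaker does win: after $zv_2$ the only remaining threats live in $zv_3v_4$ and $zv_4v_1$, and Breaker can respond to each new immediate threat, e.g.\ pairing $\{zv_4,v_4v_1\}$ and then killing $zv_3v_4$), but this is a different strategy than the one you described. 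A full proof needs to treat the incident and non-incident one-spoke subcases separately, and likewise the zero-spoke case splits into adjacent versus opposite rim edges with different Breaker replies ($zv_2$ versus an arbitrary spoke). Also a small bookkeeping slip: $W_4$ has $8$ edges, not $9$.
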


Now, using the previous
statements we will show that for $p\ll n^{-\frac{8}{15}}$ a.a.s.\ every collection~$C$ in $G\sim G_{n,p}$
is such that Breaker has a strategy to prevent cyclic triangles in an unbiased game on $C$.
It follows then by Observation \ref{reduction} that a.a.s.\ Breaker wins on $G$.
To do so, we start with the following propositions, motivated by \cite{SS}, which helps to restrict
the set of collections we need to consider.

\begin{proposition}
\label{collection_density}
Let $p\ll n^{-\frac{8}{15}}$, then a.a.s.\ every triangle collection $C$ in $G\sim \gnp$.
 satisfies $m(C)<\frac{15}{8}$. 
\end{proposition}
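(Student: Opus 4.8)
The plan is to use a first-moment argument over all triangle collections, combined with the standard observation that a triangle collection on $n$ vertices with density at least $\frac{15}{8}$ must contain a subgraph $H$ that is ``dense but not too large'', and such $H$ a.a.s.\ does not appear in $\gnp$ when $p\ll n^{-8/15}$. First I would recall, as noted right after Definition~\ref{def:collections}, that every triangle collection $C$ on $\nu$ vertices contains a bunch $B$ on the same $\nu$ vertices with $e(B)\geq 2\nu-3$; since $m(C)\geq d(B)\geq \frac{2\nu-3}{\nu}$ this already forces $m(C)<2$, and one checks that $m(C)<\frac{15}{8}$ fails only for a bounded set of ``small'' configurations. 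More precisely, if $m(C)\geq \frac{15}{8}$ then $C$ contains a subgraph $H'$ with $d(H')\geq\frac{15}{8}$, hence $e(H')\geq \frac{15}{8}v(H')$, and so $m(H')\geq\frac{15}{8}$; the key point is that within a triangle collection the densest subgraph can be taken to be itself (essentially) a collection or bunch, and a short structural/counting argument bounds its number of vertices by some absolute constant $v_0$ (the relevant extremal example being the graph $H$ with $m(H)=\frac{15}{8}$ from Figure~\ref{graphH}, on a constant number of vertices).

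The second step is purely probabilistic. Having reduced to finitely many possible ``minimal dense'' graphs $H'$, each with $v(H')\leq v_0$ and $m(H')\geq \frac{15}{8}$, I would apply Theorem~\ref{threshold_subgraph}: for each such fixed $H'$, the expected number of copies of $H'$ in $\gnp$ is $\Theta(n^{v(H')}p^{e(H')})$, and since $e(H')\geq \frac{15}{8}v(H') > \frac{1}{m(H')}\cdot\frac{something}{}$... more cleanly, $p\ll n^{-8/15}=n^{-1/m(H)}\leq n^{-1/m(H')}$ gives $\Pr(X_{H'}>0)\to 0$. Taking a union bound over the constantly many $H'$ shows that a.a.s.\ $\gnp$ contains none of them, hence a.a.s.\ no triangle collection $C\subseteq G$ has $m(C)\geq\frac{15}{8}$.

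The main obstacle is the first, structural step: showing that if a triangle collection $C$ has $m(C)\geq\frac{15}{8}$ then the witnessing dense subgraph can be chosen of bounded size and within a controlled family (so that the union bound in step two is over $O(1)$ graphs rather than over all collections, which could be exponentially many). This is where one must use the bunch decomposition carefully — tracking how each new triangle $F_i$ adds exactly one vertex and at least two edges, so a bunch on $\nu$ vertices has between $2\nu-3$ and $2\nu-2$ edges (density $<2$), and arguing that to push the density up to $\frac{15}{8}$ one is forced into one of the small ``overlapping'' configurations (two triangles sharing an edge repeatedly, wheels, $K_4$, etc.), each on at most a constant number of vertices. I expect this to follow the analysis in~\cite{SS} essentially verbatim, with $\frac{15}{8}$ playing the role that the clique-game exponent played there; the remaining estimates are routine.
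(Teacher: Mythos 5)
Your toolkit is the right one — the bunch decomposition and its density bound, Theorem~\ref{threshold_subgraph}, and a union bound over a finite family — but the structural reduction in your first step has a genuine gap. You claim that if $m(C)\geq \tfrac{15}{8}$ then the witnessing dense subgraph $H'$ can be taken to be a bounded-size collection or bunch; that is not true deterministically. For instance, a ``path of triangles'' $C$ with $T_C\cong P_k$ has $v(C)=k+2$ and $e(C)=2k+1$, so $d(C)\to 2$ as $k\to\infty$, and for $k$ large the densest subgraph of $C$ is essentially $C$ itself, of unbounded size. Also the assertion ``since $m(C)\geq d(B)\geq\tfrac{2\nu-3}{\nu}$ this already forces $m(C)<2$'' does not follow: the lower bound on $m(C)$ gives no upper bound, and indeed $m(C)\geq 2$ is possible (e.g.\ if $C\supseteq K_5$), though of course such $C$ is a.a.s.\ absent.

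The paper sidesteps this by never trying to locate a bounded dense subgraph inside a dense $C$. Instead it splits by size of $C$ directly. Every collection on at least $25$ vertices contains (by truncating the bunch decomposition after $23$ triangles) a $25$-vertex subgraph with at least $2\cdot 25-3=47$ edges, hence density $\tfrac{47}{25}>\tfrac{15}{8}$; there are finitely many graphs on $25$ vertices, each of density $>\tfrac{15}{8}$ is a.a.s.\ absent by Theorem~\ref{threshold_subgraph}, and a union bound shows that a.a.s.\ every collection in $\gnp$ has fewer than $25$ vertices. Separately, among collections on at most $25$ vertices there are only finitely many isomorphism types, and those with $m(C)\geq\tfrac{15}{8}$ are a.a.s.\ absent by the same theorem and union bound. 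Together these give the claim. The crucial difference from your plan is that the bounded witness for a large collection is the $25$-vertex sub-bunch (dense regardless of $m(C)$), not the densest subgraph; this is what keeps the union bound over $O(1)$ graphs. Your second step would then go through without the awkwardness around ``minimal dense'' configurations.
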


\begin{proof}
Each collection $C$ on at least 25 vertices contains a bunch $B$ on exactly 25 vertices with
$$d(B)=\frac{e(B)}{v(B)}\geq \frac{2v(B)-3}{v(B)}>\frac{15}{8}.$$
Since there are only finitely many such bunches and each of them a.a.s.\ does not appear 
in $G$ according to Theorem \ref{threshold_subgraph}, together with the union bound 
we obtain that a.a.s.\ each collection in $G$ lives on at most $25$
vertices. Since there are only finitely many collections with at most 25 vertices, 
we also know by the same reason that a.a.s.\ each collection  in $G$ on at most 25 vertices
 needs to have maximum density smaller than $\frac{15}{8}$.
\end{proof}

\begin{proposition}\label{properties}
Let $C$ be a triangle collection with $m(C)<\frac{15}{8}$ such 
that Maker has a strategy to create a cyclic triangle in an unbiased game on $C$,
but there is no such strategy for any collection $C'\subset C$.
Then the following properties hold:
\begin{enumerate}[(a)]
\item $5\leq v(C) \leq 7$, 
\item $e(C)= 2v(C)-1$,
\item $\delta(C)\geq 3$,
\item $C$ is not basic.
\end{enumerate}
\end{proposition}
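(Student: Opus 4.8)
The plan is to read off all four properties from the minimality of $C$, using Observation~\ref{reduction}, Proposition~\ref{obs:basic} and Corollary~\ref{cor:basic}, Observation~\ref{K_4}, and the quoted fact that every triangle collection on $n$ vertices contains a spanning bunch with at least $2n-3$ edges. The natural order is: first (c), then (d), then the lower bound in (a), then (b), and finally the upper bound in (a) falls out for free.

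For (c): a degree-$1$ vertex would have its incident edge in no triangle, and a collection has no isolated vertices, so $\delta(C)\ge 2$. Suppose some vertex $v$ has degree $2$ with $N(v)=\{a,b\}$. The collection property then forces $ab\in E(C)$, and $vab$ is the unique triangle through $v$. I claim Maker also wins the cyclic-triangle game on $C-v$; by Observation~\ref{reduction} this gives a collection $C'\subseteq C-v\subsetneq C$ on which Maker wins, contradicting minimality. I would prove the claim contrapositively: from a Breaker winning strategy on $C-v$, Breaker plays on $C$ by pairing the two edges $\{va,vb\}$ (whenever Maker claims one, Breaker claims the other) and otherwise simulating his $C-v$ strategy. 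Maker's moves at $v$ and the pairing responses never touch $E(C-v)$, so the simulated play on $C-v$ stays faithful and blocks every cyclic triangle lying inside $C-v$, while the only other triangle, $vab$, can never be completed since Maker owns at most one of $va,vb$.

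Statement (d) is immediate: if $C$ were basic, Corollary~\ref{cor:basic} would give Breaker a strategy blocking \emph{every} triangle on $E(C)$, hence every cyclic triangle, contradicting that Maker wins; so $C$ is not basic. For the lower bound of (a): by (c), $\delta(C)\ge 3$, so $v(C)\ge 4$, and if $v(C)=4$ then $C=K_4$, on which Observation~\ref{K_4} lets Breaker avoid cyclic triangles --- contradiction --- so $v(C)\ge 5$. For the upper bound in (b): if $e(C)\ge 2v(C)$ then $m(C)\ge d(C)\ge 2>\tfrac{15}{8}$, contradicting $m(C)<\tfrac{15}{8}$, so $e(C)\le 2v(C)-1$; combined with the spanning-bunch bound $e(C)\ge 2v(C)-3$ and $m(C)<\tfrac{15}{8}$ this already confines $v(C)$ to at most $23$, so the remaining step is, in principle, a finite case check.

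The crux is the lower bound $e(C)\ge 2v(C)-1$. Let $B\subseteq C$ be a spanning bunch with $e(B)\ge 2v(C)-3$ and ordering $F_1,\dots,F_r$. If $e(C)=2v(C)-3$ then $C=B$ is a \emph{tight} bunch and the vertex introduced last (by $F_r$) has degree exactly $2$ in $C$, contradicting (c). If $e(C)=2v(C)-2$, then $C$ is either such a bunch with exactly one step adding three edges --- in which case the last-introduced vertex again has degree $2$, contradicting (c) --- or a tight bunch plus a single extra edge, and then $\delta(C)\ge 3$ forces that extra edge to raise the degree of every degree-$2$ vertex of $B$; since one edge raises only two degrees, this pins $C$ down to a very restricted family of ``near-bunch'' collections. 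One then checks that each of these is basic --- deleting one or two edges of an early triangle splits $T_C$ into a disjoint union of paths, exactly as for the examples in Figure~\ref{basic}, so Proposition~\ref{obs:basic} and Corollary~\ref{cor:basic} apply --- contradicting (d). This case analysis of near-bunch collections is the part I expect to take real work; the rest is soft. Finally, once $e(C)=2v(C)-1$ is known, the upper bound $v(C)\le 7$ of (a) is immediate, as $e(C)=2v(C)-1$ together with $m(C)<\tfrac{15}{8}$ gives $2-\tfrac{1}{v(C)}<\tfrac{15}{8}$.
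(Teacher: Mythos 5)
Your proposal correctly handles (c), (d), the lower bound in (a), and the upper bound in (b), and these match the paper's argument almost exactly (the paper obtains $e(C)>e(B)$ in one stroke from $\delta(B)=2<\delta(C)$, where your Case~1 observation that the last-introduced vertex of a tight spanning bunch has degree~$2$ is just the reason why $\delta(B)=2$). The genuine gap is in the crux you yourself flag: ruling out $e(C)=2v(C)-2$. Your reduction to ``$C=B+\{e\}$ with $B$ a tight spanning bunch and $e$ joining the (at most two) degree-$2$ vertices of $B$'' is on the right track, but the sentence ``one then checks that each of these is basic --- deleting one or two edges of an early triangle splits $T_C$ into a disjoint union of paths'' is an unverified claim, not an argument, and you never identify what this restricted family actually is. The paper does the work you skip: from the bunch ordering $F_1,\dots,F_r$ with $|E(F_i)\setminus\bigcup_{j<i}E(F_j)|=2$ for $i\ge 2$, together with $\delta(C)\ge 3$ and the requirement that the extra edge $v_1v_2$ lie in a triangle of $C$, it deduces that the common neighbour $v_3$ of $v_1,v_2$ must lie in \emph{every} $F_i$, forcing $C$ to be a wheel, and then kills wheels by a pairing strategy.

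Your alternative plan (show the near-bunch is basic and invoke Corollary~\ref{cor:basic}) would also close the case, since a wheel $W_\ell$ has $T_{W_\ell}\cong C_\ell$ and deleting a rim edge leaves $T_{W_\ell-e}\cong P_{\ell-1}$, so wheels are indeed basic; but this is useless until you have actually proved that the only possibility is a wheel. So the proposal is structurally sound but incomplete at precisely the step it labels as ``the part I expect to take real work'': you need the wheel characterisation (or some equivalent pinning-down of the near-bunch) before any basicness or pairing argument can be applied.
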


\begin{proof}
Property~(d) obviously holds, using Corollary \ref{cor:basic}. Moreover, (c) follows immediately. Indeed, 
if there were a vertex $v$ with $d_C(v)\leq 2$,
then Breaker could prevent cycles on $C-v$ by the minimality condition on $C$,
and cycles containing $v$ by simply pairing the edges incident with $v$ (if there exist two such edges),
a contradiction. Furthermore, $v(C)\geq 5$ is needed,
according to Observation \ref{K_4}. 
Now, let $B$ be a bunch contained in $C$ with $v(C)$ vertices, then $e(C)>e(B)$, 
since $\delta(B)=2<\delta(C)$. 
As such a bunch contains at least $2v(B)-3$ edges,
it follows that $e(C)\geq e(B)+1 \geq 2v(C)-2$. Furthermore
$e(C)\leq 2v(C)-1$, since otherwise $m(C)\geq 2$. 
If $e(C)=2v(C)-1$, then together with $m(C)<\frac{15}{8}$, we deduce that $v(C)\leq 7$.
Otherwise, we have $e(C)=2v(C)-2$ and $e(C)=e(B)+1$. Analogously to the proof of Theorem 23 in \cite{SS} 
it then follows that $C$ can only be a wheel; for completeness let us include the argument here:
Let $E(C)\setminus E(B)=\{v_1v_2\}$.
By the definition of a bunch, we can find triangles $F_1,\ldots F_r$ in $B$ covering all edges of $B$
with the property that $|V(F_i)\setminus \cup_{j<i} V(F_j)|=1$
and $|E(F_i)\setminus \cup_{j<i} E(F_j)|\geq 2$ for every $i\in[r]$.
As $e(B)=e(C)-1=2v(B)-3$ it then follows that $r=v(C)-2$
and $|E(F_i)\setminus \cup_{j<i} E(F_j)|= 2$ for every $i\in[r]\setminus\{1\}$,
as otherwise $e(B)> 3 + 2(r-1) = 2v(C)-3$, a contradiction. Thus,
for every $i \in[r]\setminus\{1\}$, $F_i$ needs to share exactly one edge with
$\cup_{j<i} F_j$. From this, we can conclude that $B$ needs to contain at least
two vertices of degree 2. However, as $\delta(C)\geq 3$
and $E(C)\setminus E(B)=\{v_1v_2\}$, we know that $v_1$ and $v_2$
must be the only vertices in $B$ of degree 2.
Now, by the definition of a triangle collection, $v_1v_2$
needs to be part of a triangle in $C$. Thus, there needs to be a vertex $v_3$
such that $v_1v_3,v_3v_2\in E(B)$. But this is only possible if $v_3$
belongs to every triangle $F_i$, $i\in[r]$, and thus, $C$ needs to be a wheel.  
Now, to finish the proof, observe that Breaker can always prevent triangles
in an unbiased game on a wheel by a simple pairing strategy,
a contradiction to our assumption.
\end{proof}

So, the goal will be to show that there exists no collection $C$ which satisfies all the conditions
given in Proposition \ref{properties}.

\begin{figure} [htbp]
\centering
\includegraphics[scale=0.7]{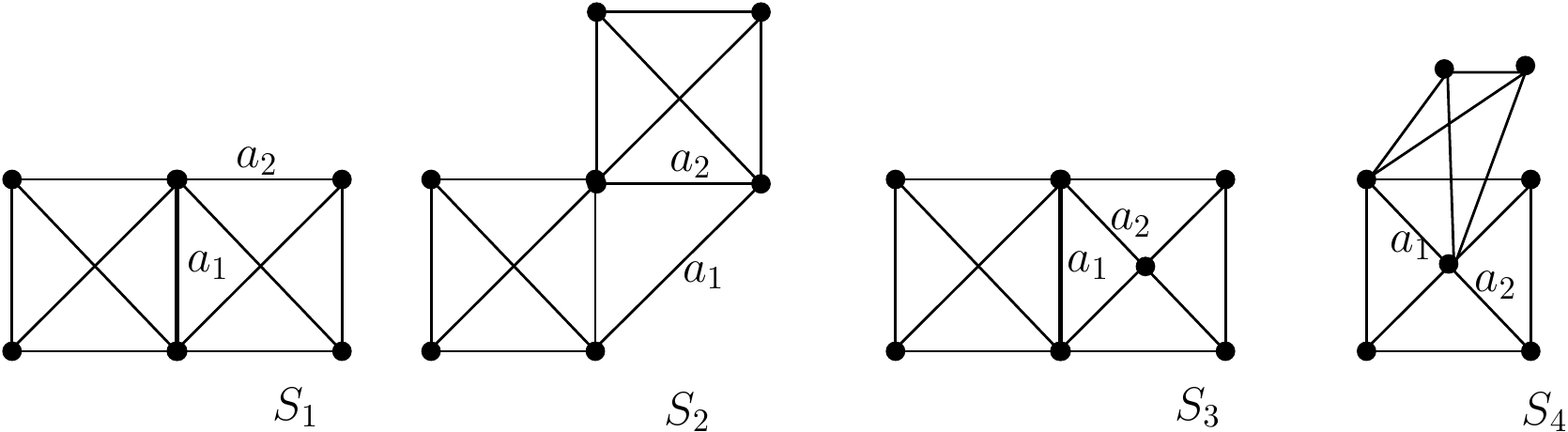}
\caption{Special collections.}
\label{special}
 \end{figure}

\begin{lemma}\label{collection}
If a collection $C$ satisfies (a) - (d) from Proposition \ref{properties}, then
either $C$  is isomorphic to $K_5^-$ ($K_5$ minus one edge) or $C$ is isomorphic to one of the graphs $S_i$, $1\leq i\leq 4$, given in Figure \ref{special}.
\end{lemma}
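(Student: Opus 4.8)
The statement is a finite classification: we must enumerate all triangle collections $C$ satisfying (a)–(d) of Proposition~\ref{properties} and show each is isomorphic to $K_5^-$ or one of $S_1,\dots,S_4$. Since (a) restricts $v(C)\in\{5,6,7\}$ and (b) forces $e(C)=2v(C)-1$, there are only three cases to treat: $(v,e)\in\{(5,9),(6,11),(7,13)\}$. In each case the search is finite, so in principle one can grind through all graphs on that many vertices with that many edges, of minimum degree $\geq 3$, that are triangle collections, maximum density $<15/8$, and not basic. The goal of the proof is to organize this so the casework is short.

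\textbf{First steps.} First I would exploit property (c) together with the edge count via a degree argument: $\sum_v d_C(v)=2e(C)=4v(C)-2$, so the average degree is $4-\tfrac{2}{v(C)}$, and with $\delta(C)\geq 3$ this leaves very little room. Concretely, if $v(C)=5$ then $\sum d_C(v)=18$ with each $d_C(v)\geq 3$ and $\leq 4$, forcing the degree sequence $(4,4,4,3,3)$, and a quick check shows the only such graph is $K_5^-$ (delete the edge between the two degree-$3$ vertices). For $v(C)=6$ we get $\sum d_C(v)=22$, so the degree sequence is $(4,4,4,4,3,3)$ or $(4,4,4,3,3,4)$-type — i.e. exactly two vertices of degree $3$ (if the two low-degree vertices were adjacent or not gives subcases), up to permutations just $(4,4,4,4,3,3)$; for $v(C)=7$, $\sum d_C(v)=26$, giving degree sequence $(4,4,4,4,4,3,3)$ (five $4$'s, two $3$'s) — again exactly two vertices of degree $3$. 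So in all three cases $C$ has exactly two vertices of degree $3$ and all others of degree $4$.

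\textbf{Using the collection structure and the density bound.} Next I would use that $C$ is a triangle collection — every edge lies in a triangle and $T_C$ is connected — to pin down the local structure around the two degree-$3$ vertices and to build $C$ edge by edge from a spanning bunch $B$ (which exists with $e(B)\geq 2v(C)-3=e(C)-2$, as noted before Proposition~\ref{collection_density}). Since $e(C)-e(B)\leq 2$ and $\delta(B)\le 2<\delta(C)$, the bunch misses exactly one or two edges of $C$; tracking how the bunch is assembled (each new triangle adding one vertex and $\geq 2$ edges) limits the possibilities drastically. The maximum-density constraint $m(C)<15/8$ rules out any $C$ containing a denser subcollection — in particular it forbids $K_4$ subgraphs (density $2$) and $W_4$ or $W_5$ as subgraphs on the relevant vertex counts — which kills most candidates on $6$ and $7$ vertices. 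After this pruning, only finitely many graphs survive, and I would check directly that the survivors are exactly $K_5^-$ and $S_1,\dots,S_4$, discarding the rest because they are basic (contradicting (d)), or because Breaker wins by a pairing strategy (again violating minimality, hence not genuinely new candidates — but here we only need the classification, so "basic" suffices to exclude).

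\textbf{Main obstacle.} The hard part will be the bookkeeping in the $v(C)=7$ case: with five degree-$4$ vertices and two degree-$3$ vertices there are a priori many non-isomorphic graphs to consider, and one must systematically argue — using connectivity of $T_C$, the "every edge in a triangle" condition, and $m(C)<15/8$ — that almost all of them either fail to be a triangle collection, contain a forbidden dense piece, or are basic. I expect the cleanest route is to root the analysis at one degree-$3$ vertex $u$: its three neighbours, the triangles through $u$, and the way the remaining vertices attach, together with the tight edge budget, should force the configuration into one of the listed shapes. Figure~\ref{special} presumably shows $S_1,\dots,S_4$ precisely so that this case check can be presented by matching each surviving configuration against the figure.
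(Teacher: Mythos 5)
Your high‑level plan (enumerate over $(v(C),e(C))\in\{(5,9),(6,11),(7,13)\}$, use the triangle‑collection structure, the bunch, and \emph{not basic} to prune) is sound in outline and matches the paper's strategy at that level. However, the two main pruning devices you propose to make the casework short are both wrong, and they would lead you away from the correct answer.

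First, you assert $\Delta(C)\le 4$ to force the degree sequences $(4,4,4,4,3,3)$ and $(4,4,4,4,4,3,3)$. There is no such bound in (a)–(d), and it fails: $S_1$ itself (one of the graphs in the conclusion!) consists of a $K_4$ on $\{v_1,\dots,v_4\}$ together with two outside vertices $x,y$ joined to each other and both joined to $v_1$ and $v_2$; its degree sequence is $(5,5,3,3,3,3)$. So assuming $\Delta(C)\le 4$ would incorrectly discard $S_1$ at the very first step of the $v(C)=6$ case.

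Second, you claim $m(C)<\frac{15}{8}$ forbids $K_4$, $W_4$, $W_5$ as subgraphs because they are ``denser.'' They are not: $d(K_4)=\frac{6}{4}=\frac{3}{2}$, $d(W_4)=\frac{8}{5}$, $d(W_5)=\frac{10}{6}=\frac{5}{3}$, all strictly less than $\frac{15}{8}$. Indeed the paper's proof explicitly \emph{splits} on whether $C\supseteq K_4$ (and, for $v(C)=7$, on whether $C\supseteq W_4$), and several of the surviving graphs $S_i$ do contain a $K_4$ or a $W_4$. So this pruning would again eliminate legitimate outcomes.

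With both shortcuts removed, your approach collapses to the raw finite enumeration, and the real content of the proof — the organized case distinction on $K_4\subseteq C$, then $W_4\subseteq C$, and the step‑by‑step forcing of the remaining edges using (b), (c), the every‑edge‑in‑a‑triangle property, and the list of basic graphs $A_i,B_i$ to discard rejects — is exactly what is missing. Only the $v(C)=5$ case ($C\cong K_5^-$) in your proposal is actually correct as written.
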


\begin{proof}
If $v(C)=5$, then $e(C)=9$, by Property (b), and the statement follows obviously. So, let $v(C)\neq 5$.
We will show now that a collection satisfying (a) - (c) either is isomorphic
to one of the collections $S_i$, or it is isomorphic to one
of the basic collections $A_i$ or $B_i$ from Figure \ref{basic}, thus contradicting Property (d).

Let us start with $v(C)=6$.
Assume first that $C$ contains a subgraph $H \cong K_4$ and let $\{x,y\}=V(C)\setminus V(H)$. 
With $e(C)=11$ and $\delta(C)\geq 3$ we conclude
$xy\in E(C)$, and by the definition of a collection it follows that $x$ and $y$ have a common neighbour $v_1\in V(H)$. Because of (c),
we further have $xv_2\in E(C)$ for some $v_2\in V(H)\setminus \{v_1\}$. Now, if $yv_2\in E(C)$,
then $C\cong S_1$, otherwise by (c) we have $yv_3\in E(C)$ for some $v_3\in V(H)\setminus \{v_1,v_2\}$
and so $C\cong A_1$.\\
Assume then that $C$ does not contain a clique of order 4. We still find a subgraph $H'\subseteq C$
with four vertices $V(H')=\{v_1,v_2,v_3,v_4\}$ and five edges, say $v_1v_3\notin E(H)$.
Since $C$ is a triangle collection, there needs to be some $x\in V(C)\setminus V(H')$ that is part of the same triangle
as an edge $e$ from $H'$. Let $y$ be the unique vertex 
in $V(C)\setminus(V(H')\cup \{x\})$.

Assume first that $e=v_2v_4$.  We know then that $\{x,v_1,v_3\}$
is an independent set in $C$, since otherwise we would have a 4-clique in $C$. By (b) and (c), it thus follows
that \linebreak $N(y)=\{x,v_1,v_3,v_i\}$ for some $i\in\{2,4\}$, which gives $C\cong A_2$.

Assume then that $e\neq v_2v_4$ and w.l.o.g.\ $e=v_3v_4$ by symmetry of $H'$.
If $v_1x\in E(C)$, it then follows that $d(y)=3$, since (b) and (c) need to hold;
moreover, $C[V(C)\setminus \{y\}]\cong W_4$ where $v_4$ represents the center of the wheel.
In case $v_4y\in E(C)$, we can only have $C\cong A_2$, as $C$ does not contain a 4-clique;
and in case $v_4y\notin E(C)$, we can assume that $N(y)=\{v_1,v_2,v_3\}$ (because of the symmetry of the 4-wheel),
which yields $C\cong A_3$.
If otherwise $v_1x\notin E(C)$,
then, since there is no 4-clique in $C$, 
we immediately obtain $d(y)=4$ and $v_1,x\in N(y)$,
as $e(C)=11$ and $\delta(C)\geq 3$.
Moreover, $v_4\notin N(y)$, since we otherwise would
obtain a 4-clique, independently of the choice
of the fourth neighbour of $y$.
Thus, we conclude \linebreak $N(y)=\{v_1,v_2,v_3,x\}$
and $C\cong A_3$.

Now, let $v(C)=7$. We distinguish three cases.

{\bf Case 1.} Assume that $C$ contains a subgraph $H \cong K_4$. Let $\{x,y,z\}=V(C)\setminus V(H)=:V'$. 
With $e(C)=13$ and $\delta(C)\geq 3$ it follows that $\{x,y,z\}$ is not an independent set, w.l.o.g.\ $xy\in E(C)$.
By the definition of a collection it further follows that $x$ and $y$ have a common neighbour -- the vertex $z$
or some vertex $v\in V(H)$.

Assume first that $z\in N(x)\cap N(y)$. By $\delta(C)\geq 3$ each vertex in $V'$ needs to have at least one neighbour in
$V(H)$. If there were a matching of size 3 between $V'$ and $V(H)$, then by (b), one of the matching edges
could not be part of a triangle, a contradiction.
If all the three vertices have a common neighbour in $V(H)$, then one easily deduces $C\cong S_2$.
Otherwise, by symmetry we can assume that there is a vertex $v_1\in V(H)$ such that
$v_1x,v_1y\in E(C)$ and $v_1z\notin E(C),$ and moreover, $v_2z\in E(C)$
for some $v_2\in V(H)\setminus \{v_1\}.$ Now, let $\{v_3,v_4\}=V(H)\setminus \{v_1,v_2\}$.
To ensure that $v_2z$ belongs to some triangle in $C$,
we finally need to have exactly one of the edges from $\{v_3z,v_4z,v_2x,v_2y\}$ to be an edge
in $C$. The first two edges however do not result in a triangle collection,
while for the other two edges we get $C\cong S_3.$

Assume then that $z\notin N(x)\cap N(y)$, but $v\in N(x)\cap N(y)$ for some $v\in V(H)$.
Because of (b) and (c), either $xz\in E(C)$ or $yz\in E(C)$, w.l.o.g.\ say $xz\in E(C)$ and  $yz\notin E(C)$.
As $\delta(C)\geq 3$, we then immediately get $yw\in E(C)$ for some
$w\in V(H)\setminus \{v\}$. Moreover, we then need two other edges incident with $z$ besides $xz$,
of which one is $zv$ to ensure that $xz$ belongs to a triangle. If the second edge is $zw$,
then $C\cong S_4$; otherwise $C\cong B_1$.

{\bf Case 2.} Assume that $C$ does not contain a clique of order 4, but there is some $H\subseteq C$
with $H\cong W_4$. Let $\{x,y\}=V(C)\setminus V(H)=:V'$ and
let $z$ be the unique vertex with $d_H(z)=4$. By (b) and (c), it follows that $xy\in E(C)$,
and since $C$ is a collection, there is a common neighbour of $x$ and $y$ in $V(H)$.

Assume first that $z\in N(x)\cap N(y)$. As $\delta(C)\geq 3$, both vertices $x$ and $y$ 
have another neighbour in $V(H)\setminus \{z\}$, however
there cannot be a second common neighbour, since there is no 4-clique in $C$.
One easily checks that $C\cong B_2$ or $C\cong B_3$ follows.

Assume then that $z\notin N(x)\cap N(y)$, but $v\in N(x)\cap N(y)$
for some $v\in V(H)\setminus \{z\}.$ If $xz\in E(C)$ (or $yz\in E(C)$),
we then need $yw\in E(C)$ (or $xw\in E(C)$) for some $w\in N_H(v)\setminus\{z\}$
to ensure that $e(C)=13$ and $\delta(C)\geq 3$ holds
while $C$ is a triangle collection. This gives $C\cong B_4$. Otherwise, we have $z\notin N(x)\cup N(y)$. 
In this case, let $w'$ to be the unique vertex of $H$ not belonging to $N(v)\cup \{v\}$.
Then we also have $w'\notin N(x)\cup N(y)$. Indeed, if we had $yw'\in E(C)$ say,
then as $yw'$ needs to be part of some triangle and as $d(x)\geq 3$ and $e(C)=13$,
we would need $xw'\in E(C)$, in which case it is easily checked that $C$ is not a triangle collection.
So, we can assume that $xv_1\in E(C)$ for some $v_1\in V(H)\setminus \{v,w',z\}$,
and $yv_1\notin E(C)$, because $C$ does not have a 4-clique.
Finally, since $\delta(C)\geq 3$, we need $v_2y\in E(C)$
for the unique vertex $v_2\in V(H)\setminus \{v,w',z,v_1\}$, i.e.\ $C\cong B_5$. 

{\bf Case 3.}  Finally assume that $C$ neither contains a 4-clique nor a 4-wheel.
It is easy to check that $C_0\subseteq C$ (with notation
of vertices as given in Figure \ref{subgraphs}), and by the assumption
of this case we further have $v_1v_3,v_1v_4,v_3v_5\notin E(C)$.
Since $C$ is a triangle collection, we find a vertex $x\in V':=V(C)\setminus V(C_0)$
which belongs to a triangle that also contains an edge $e\in E(C_0)$.
Let $\{y\}=V'\setminus\{x\}$.
By symmetry of $C_0$ we may assume that $e\in\{v_2v_5,v_4v_5,v_1v_5,v_1v_2\}$.

\begin{figure} [htbp]
\centering
\includegraphics[scale=0.6]{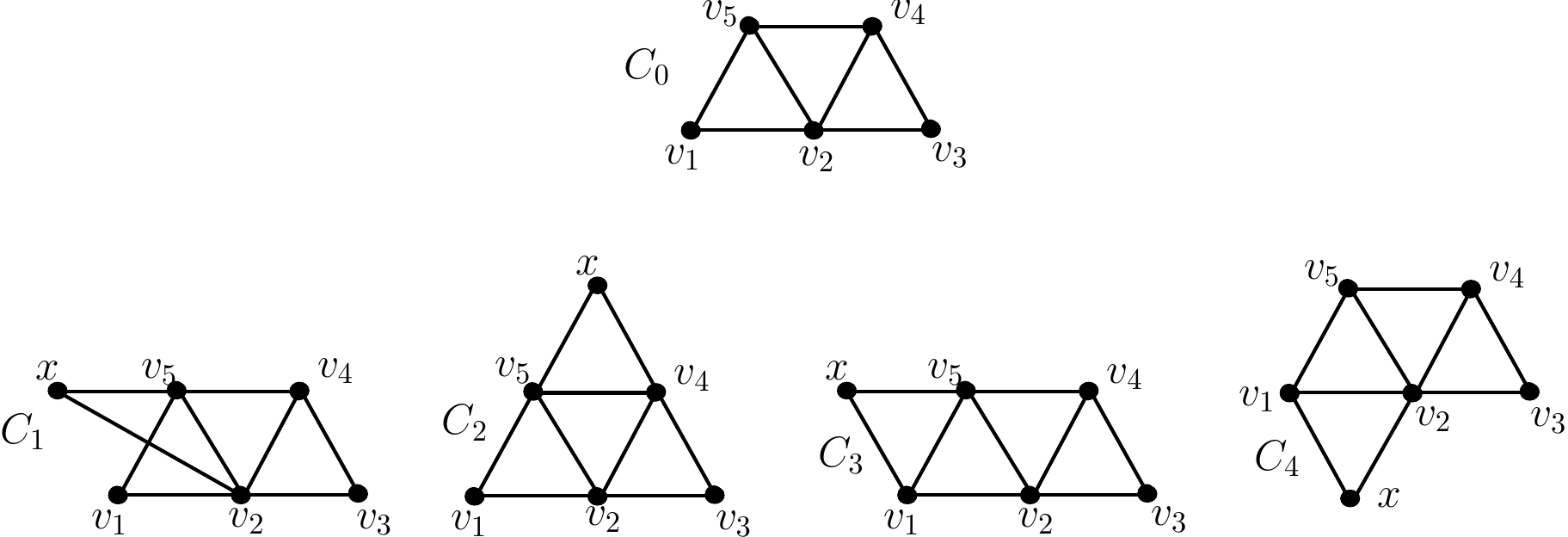}
\caption{Subgraphs.}
\label{subgraphs}
 \end{figure}

Assume first that $e=v_2v_5$ were possible, i.e.\ $C_1\subseteq C$.
Then by assumption of Case 3, every edge in $E(C)\setminus E(C_1)$ would need to be incident
with $y$. Because of (b) and (c) we then had that
$d(y)=4$ and $v_1y,v_3y,xy\in E(C)$. Since these three edges would need
to belong to triangles, we further would need $yv_2\in E(C)$,
which would create a 4-wheel on $V(C)\setminus\{v_3,v_4\}$ with center $v_2$,
in contradiction to the assumption.

So, as next assume that $e=v_4v_5$ were possible, i.e.\ $C_2\subseteq C$.
Then analogously every edge in $E(C)\setminus E(C_2)$ would need to be incident
with $y$, and $d(y)=4$ and $\{v_1,v_3,x\}\subseteq N(y)$, because of (b) and (c). 
But then, independently of what the fourth 
neighbour of $y$ is, one of the edges $v_1y,v_3y,xy$ could not belong to a triangle, again a contradiction.

As third, assume that $e=v_1v_5$, i.e.\ $C_3\subseteq C$.
By the assumption of Case 3,
every edge in $E(C)\setminus (E(C_3)\cup \{xv_3\})$ needs to be incident
with $y$.
If $xv_3\notin E(C)$, then we have $d(y)=4$ and $xy,v_3y\in E(C)$,
because of $e(C)=13$ and $\delta(C)\geq 3$. Depending on how the other two edges incident
with $y$ are chosen, we either obtain a contradiction by creating a 4-clique or a 4-wheel, 
or we see that $C\cong B_6$.
So, let $xv_3\in E(C)$. Then $d(y)=3$, by
(b) and (c), and to have $xv_3$ in a triangle, we need
$yx,yv_3\in E(C)$. It follows that $C\cong B_6$, if $yv_1\in E(C)$ or
$yv_4\in E(C)$, or $C\cong B_7$, if $yv_2\in E(C)$ or $yv_5\in E(C)$.

As last, assume that $e=v_1v_2$, i.e.\ $C_4\subseteq C$. If $xv_3\in E(C)$
were possible, then we had $d(y)=3$ because of $e(C)=13$ and $\delta(C)\geq 3$.
But then, depending on the three edges incident with $y$, we would
get a 4-clique or a 4-wheel in $C$, or we would find an edge which is not contained in a triangle,
a contradiction. So, we can assume that $xv_3\notin E(C)$.
Then, by (b), (c) and the assumption of Case 3,
we deduce that $d(y)=4$ and $yx,yv_3\in E(C)$.
If $yv_2\in E(C)$ were also an edge of $C$, then for any choice of the fourth edge
incident with $y$, we would create a 4-clique or a 4-wheel in $C$.
That is, we can assume that $yv_2\notin E(C)$. But then we need
$v_1y,v_4y\in E(C)$ to ensure that $yx$ and $yv_3$ belong to triangles, which yields $C\cong B_7$.~
\end{proof}

\begin{lemma} \label{breaker_part}
For any collection given by Lemma \ref{collection}, Breaker has a strategy
to prevent cyclic triangles.
\end{lemma}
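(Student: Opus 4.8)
The plan is to prove Lemma~\ref{breaker_part} by a finite case analysis: by Lemma~\ref{collection} the only collections we need to handle are $K_5^-$ and the four graphs $S_1,\dots,S_4$ from Figure~\ref{special}, all on at most seven vertices, so it suffices to exhibit for each of them an explicit Breaker strategy that prevents cyclic triangles in the unbiased game. I would describe a single template for these strategies and then check, graph by graph, that it applies.

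The template is: Breaker waits for Maker's first edge $m_1$ and then chooses his first edge $b_1=b_1(m_1)$ so as to ``cut'' the collection into pieces on which he already knows how to play. Each of the five graphs is assembled from one or two copies of $K_4$ (and, for one or two of the $S_i$, a copy of $W_4$) that overlap in at most one vertex, together with a few ``bridge'' edges that either lie in no other triangle or lie in a single triangle shared by two pieces. Breaker's move $b_1$ will be such a bridge edge, or, when $m_1$ is itself one of those special edges, a carefully chosen edge of a central triangle; after $b_1$ is claimed, every triangle of $G$ that Maker can still complete is contained in one of finitely many pairwise edge-disjoint subgraphs $G'_1,\dots,G'_t$, each of which is a $K_4$, a $W_4$, or a (very) basic graph. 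Breaker then plays the games on the $G'_j$ independently, using the strategies of Observation~\ref{K_4}, Observation~\ref{4-wheel}, and Corollary~\ref{cor:basic}/Proposition~\ref{obs:basic} respectively (answering an arbitrary free edge whenever Maker plays outside all of the $G'_j$), and in some of the cases a direct pairing strategy on a piece suffices. For $K_5^-$ this degenerates to a single piece: writing $K_5^-$ as two copies of $K_4$ glued along a common triangle $cde$, Breaker deletes one edge of $cde$ and the residual graph is basic.

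The point that makes this work is the bookkeeping of Maker's head start. Relative to a piece $G'_j$, Maker may have claimed $m_1$ (if $m_1\in E(G'_j)$) before Breaker's move $b_1$, and then her next move $m_2$, before Breaker ever responds inside $G'_j$; so Maker gets a head start of at most two edges on each $G'_j$. This is exactly why we invoke the versions of Observations~\ref{K_4} and~\ref{4-wheel} and of Proposition~\ref{obs:basic} that permit Maker to claim two edges in her first turn. Moreover, since Maker has committed only $m_1$ at the moment Breaker picks $b_1$, Breaker can always choose $b_1$ (hence the decomposition, hence which wheel center and which pair of distinguished basic-graph edges is in play) so that the forbidden two-edge head start --- both edges incident with a wheel center, or both distinguished edges $e_1,e_2$ of a basic graph --- cannot arise: $m_1$ can be steered out of the forbidden pair, and Maker's single remaining free edge $m_2$ cannot on its own complete such a pair. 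No orientation subtlety enters, since Breaker only needs to block \emph{cyclic} triangles and the quoted statements already grant Maker her choice of orientations; and a Breaker strategy on the collection $C$ that blocks cyclic triangles blocks them in the $T_C$-tournament game on $C$ as well.

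The main obstacle is the case enumeration itself. For each of $K_5^-,S_1,S_2,S_3,S_4$ one must run through the possible first moves $m_1$ up to the automorphism group of the graph, exhibit a suitable $b_1$ and decomposition $G'_1,\dots,G'_t$, and verify that after $b_1$ the surviving triangles really do lie inside the $G'_j$ and that the $G'_j$ are genuinely edge-disjoint so the independent sub-strategies do not collide (for instance, for $S_2$, viewed as two $K_4$'s sharing one vertex plus a bridge edge, cutting that bridge edge leaves two edge-disjoint $K_4$'s; for $S_1$, cutting the edge common to the two $K_4$'s leaves a basic graph; and so on for $S_3,S_4$ using the $W_4$'s they contain). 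Each individual check is elementary, but there are several of them and they must be done with care about which triangles disappear and which head-start configuration is created.
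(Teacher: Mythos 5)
Your plan is essentially the paper's for the four graphs $S_1,\dots,S_4$: the paper also covers each $S_i$ by a $K_4$ and a $K_4$ or $W_4$, lets Breaker's first edge be one of two designated edges chosen in response to Maker's first move, and then has Breaker play separately on the two pieces via Observation~\ref{K_4}, Observation~\ref{4-wheel} and Proposition~\ref{obs:basic}. For that part the proposal and the paper agree.

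However, your treatment of $K_5^-$ has a genuine gap, in fact two related ones, and the $K_5^-$ case is exactly where the real work in this lemma lies. First, the ``steering'' claim fails. Write $V(K_5^-)=\{v_1,v_2,v_3,v_4,v_5\}$ with the central triangle on $\{v_1,v_2,v_3\}$ and the missing edge $v_4v_5$. If Maker's first edge $m_1$ is an edge of the central triangle, say $m_1=v_1v_2$, then the only edges Breaker can delete to leave a $W_4$ are $v_1v_3$ (giving a wheel with center $v_2$) or $v_2v_3$ (center $v_1$); in both cases $m_1$ is incident with the center, so Maker may now choose $m_2$ also incident with the center and Observation~\ref{4-wheel} does not apply. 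There is no choice of $b_1$ that avoids this, and deleting an edge of $E(X,Y)$ instead does not leave a $K_4$, $W_4$ or very basic graph, nor does Corollary~\ref{cor:basic} help since it only tolerates a one-edge head start. Second, and more fundamentally, ``no orientation subtlety enters'' is false here: Maker \emph{can} win the plain (undirected) triangle game on $K_5^-$. For instance, after $m_1=v_1v_2$, whatever Breaker does, Maker can reach a position with two simultaneous open triangle threats (e.g.\ after Maker claims $v_1v_2$, $v_2v_3$, $v_2v_5$ she threatens both $v_1v_5$ for $\{v_1,v_2,v_5\}$ and $v_3v_5$ for $\{v_2,v_3,v_5\}$). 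So there is no Breaker strategy on $K_5^-$ that blocks all undirected triangles, and hence no decomposition of $K_5^-$ into pieces on which Breaker blocks all triangles can exist. Breaker's win here depends on the cyclic-orientation constraint, which is exactly what the paper's five-subcase argument for $K_5^-$ exploits (for example, the observation that the orientation Maker must give a certain edge to complete one potential cyclic triangle is incompatible with the orientation needed for the other remaining threat). Your proposal would need to be supplemented with such an orientation-sensitive analysis for $K_5^-$ to be a complete proof.
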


\begin{proof}
If $C\cong S_i$ for some $i$, note that $C$ is covered by two (not necessarily disjoint) graphs $C(1)$, $C(2)$, plus 
at most one additional edge if $C\cong S_2$, where each of the $C(i)$ is isomorphic to $K_4$ or $W_4$.
Choose edges $a_1$ and $a_2$ as indicated in Figure \ref{special}.
In his first move, Breaker claims the edge $a_1$ if Maker did not orient it before;
otherwise he claims the edge $a_2$.
Afterwards, Breaker plays on $C(1)$ and $C(2)$ separately, 
meaning: each time Maker orients an edge of $C(i)$, Breaker claims an edge of $C(i)$ if there remains one.
Now, using Proposition~\ref{obs:basic} and 
Observation \ref{K_4}, Breaker can do this in a way such that he prevents cyclic triangles on each $C(i)$,
and therefore in $C$. 

Finally, we need to look at the case when $C\cong K_5^-$. By an easy case analysis, it can be proven that Breaker has a strategy
to prevent cyclic triangles on $C$. We give a sketch in the following. Let $V(C)=X\cup Y$ with $X=\{v_1,v_2,v_3\}$ and $Y=\{v_4,v_5\}$, and let 
$E(C)=\binom{X}{2}\cup \{xy:\, x\in X,\, y\in Y\}.$ 

\textbf{Case 1.} Maker orients an edge in $E(X,Y)$ in her first turn.

W.l.o.g.\ let $e=v_1v_4 \in E(X,Y)$ be the edge to which Maker gives an orientation in her first move. 
Then Breaker's strategy is to delete the edge $v_1v_2$. 
Note that $C-\{v_1v_2\}$ is isomorphic to the $4$-wheel $W_4$, here with center $v_3$, and Maker's first arc is not incident with $v_3$. Thus, Breaker can win by Observation \ref{4-wheel}.

\textbf{Case 2.} Maker orients an edge inside $E(X)$ in her first turn.

W.l.o.g.\ let Maker's first oriented edge be $(v_1,v_2)$. Then Breaker's first move will be to delete the edge $v_2v_4$.
Afterwards, Breaker's second move will depend on Makers second move, as follows:

If Maker orients $(v_1,v_3)$ or $(v_3,v_2)$ for her second move, then Breaker claims $v_2v_5$ and afterwards he wins by an easy pairing strategy, with the pairs $\{v_1v_4,v_3v_4\}$ and $\{v_1v_5,v_3v_5\}$.

If Maker for her second move chooses one of the arcs $(v_1,v_4)$, $(v_4,v_1)$, $(v_3,v_4)$, $(v_4,v_3)$, $(v_1,v_5)$, $(v_5,v_2)$, $(v_2,v_3)$ and $(v_3,v_5)$, then Breaker for his second move claims the edge $v_1v_3$. 
As he claims $v_2v_4$ and $v_1v_3$ then, the only triplets on which Maker could create a triangle are $\{v_1,v_2,v_5\}$
and $\{v_2,v_3,v_5\}$. In either of the cases it is easy to check that from now on Breaker can prevent cyclic triangles.

If Maker for her second move chooses $(v_2,v_5)$ or $(v_5,v_3)$, then Breaker claims $v_1v_5$ for his second move. Afterwards there remain three triplets
on which Maker still could create a triangle, namely $\{v_1,v_3,v_4\}$, $\{v_1,v_2,v_3\}$ and $\{v_2,v_3,v_5\}$.
To block a triangle on $\{v_1,v_3,v_4\}$, Breaker can consider a pairing $\{v_1v_4,v_3v_4\}$. For the other two triplets
it is easy to check then that Breaker can prevent cyclic triangles, since the orientation which $v_2v_3$ needs, to create a cyclic triangle, is different for these two remaining triplets.

If Maker for her second move chooses $(v_3,v_1)$, then Breaker needs to claim $v_2v_3$. Afterwards there remain three triplets
on which Maker still could create a triangle, namely $\{v_1,v_3,v_4\}$, $\{v_1,v_2,v_5\}$ and $\{v_1,v_3,v_5\}$.
To block a triangle on $\{v_1,v_3,v_4\}$, Breaker can consider a pairing $\{v_1v_4,v_3v_4\}$. For the other two triplets
it again is easy to check that Breaker can prevent cyclic triangles, since the orientation which $v_1v_5$ needs, to create a cyclic triangle, is different for these two triplets.

Finally, if Maker for her second move chooses $(v_5,v_1)$, then Breaker needs to claim $v_2v_5$. Afterwards there remain three triplets
on which Maker still could create a triangle, namely $\{v_1,v_3,v_4\}$, $\{v_1,v_2,v_3\}$ and $\{v_1,v_3,v_5\}$.
To block a triangle on $\{v_1,v_3,v_4\}$, Breaker can consider a pairing $\{v_1v_4,v_3v_4\}$. For the other two triplets
it again is easy to check that Breaker can prevent cyclic triangles, since the orientation which $v_1v_3$ needs, to create a cyclic triangle, is different for these two triplets.
\end{proof}

To summarize, we have shown now that for $p\ll n^{-\frac{8}{15}}$, a.a.s.\ Breaker can prevent cyclic triangles
in the tournament game on $G\sim \gnp$. Indeed, by Proposition \ref{properties},
Lemma \ref{collection} and Lemma \ref{breaker_part},
we know that there exists no collection $C$ with $m(C)<\frac{15}{8}$
on which Maker has a strategy to create a copy of $T_C$. 
By Proposition \ref{collection_density} we however know that for $p\ll n^{-\frac{8}{15}}$
a random graph $G\sim \gnp$ a.a.s.\ only contains such collections,
and using Observation \ref{reduction} we thus conclude that a.a.s.\
Maker does not have a winning strategy when playing on $G\sim \gnp$,
which at the same time guarantees a winning strategy for Breaker.\hfill $\Box$

\end{document}